\documentclass[11pt,reqno]{amsart}
\usepackage{amsmath,mathtools}
\usepackage{graphicx}
\usepackage{epsfig,epsf,psfrag}
\usepackage{amsfonts}
\usepackage{setspace}
\usepackage{url}
\usepackage{color}
\usepackage[font=small]{caption}
\usepackage[font=footnotesize]{subcaption}
\allowdisplaybreaks

\usepackage[top=1in, bottom=1in, left=1.2in, right=1.2in]{geometry}
\usepackage{afterpage}
\usepackage{bm}
\usepackage{multicol}
\usepackage{multirow} 
\usepackage{float}

\usepackage{isomath}
\usepackage{longtable}
\usepackage[section]{placeins}
\usepackage[colorlinks,bookmarksopen,bookmarksnumbered,citecolor=blue, linkcolor=blue, urlcolor=blue]{hyperref}

\usepackage{comment}
\excludecomment{mycomment}

\usepackage{booktabs}
\usepackage{array}
\usepackage{siunitx}

\newtheorem{theorem}{Theorem}[section]

\newtheorem{lemma}[theorem]{Lemma}

\newtheorem{remark}[theorem]{Remark}

\title[]{A Spectral Solver for Acoustic  Scattering by Multiple Quasi-Axisymmetric Structures}

\author{Jun Lai}
\address{School of Mathematical Sciences, Zhejiang University,
	Hangzhou, Zhejiang 310027, China, and Center for Interdisciplinary Applied Mathematics, Zhejiang University, Hangzhou, Zhejiang 310027, China}
\email{laijun6@zju.edu.cn}
\author{Yuxin Li}
\address{School of Mathematical Sciences, Zhejiang University,
	Hangzhou, Zhejiang 310027, China}
\email{yuxinli@zju.edu.cn}

\subjclass[2020]{35J05, 45A05, 65R20, 78A40}
\keywords{Helmholtz equations, boundary integral equations,   multiple scattering, body of revolution}
\date{}

\begin{document}
\begin{abstract}
Acoustic scattering arises in a wide range of applications, including medical imaging, geophysical exploration, acoustic metamaterials, etc. In this paper, we develop a fast and highly accurate algorithm for acoustic scattering by multiple quasi-axisymmetric objects, whose axis of rotation is an arbitrary curve. The method is based on a Nystr{\"o}m discretization that combines Gauss-Legendre quadrature with the trapezoidal rule. To treat the singular integrals that occur when target points are close to or coincide with source points, we reformulate them as evaluations of the modal Green's function and its derivatives, which are computed efficiently using the fast Fourier transform and convolution. The multiple scattering solver is then constructed by coupling the single scatterer discretizations through inter-body boundary integral interactions. We also present a convergence analysis for scattering problems with smooth geometries. Numerical examples demonstrate the efficiency and accuracy of the proposed method for solving multiple scattering problems involving up to 1000 quasi-axisymmetric structures.
\end{abstract}
\maketitle 
\section{Introduction}

Acoustic scattering plays a key role in many important applications, including medical imaging, geophysical exploration, non-destructive testing, and the design of acoustic metamaterials~\cite{article44,article45}. In many of these settings, waves interact with multiple scatterers simultaneously, and accurate prediction of the resulting collective scattering effects is essential for simulations and optimizations \cite{article56,article57}. However, the computation of multiple scattering suffers from slow convergence and low-order accuracy, especially in three dimensions. The goal of this paper is to develop  a high-order solver for acoustic scattering by multiple quasi-axisymmetric structures. 

When considering acoustic scattering problems in homogeneous media, boundary integral equation (BIE) methods  are particularly attractive because they reduce the dimensionality by one and enforce the radiation condition automatically through the Green's function representation \cite{article41,article13}. Hence, compared with volumetric methods such as finite element and finite difference methods, BIE avoids truncating the unbounded domain and typically requires far fewer unknowns to achieve high accuracy. At the same time, the quality of a boundary integral solver depends critically on how the boundary operators are discretized. For acoustic objects with smooth surfaces, common discretization strategies include panel-based collocation, Galerkin schemes, and Nystr\"om methods based on corrected quadratures or high-order product integration \cite{article49, article48,article13}.  Among these approaches,  Nystr\"om discretizations are especially appealing because they combine implementation simplicity with high-order accuracy, provided that singular and nearly singular interactions are treated carefully.

For a single obstacle, the difficulty arises when the source and target points are close. In the multiple scattering setting~\cite{article57}, in addition to the self-interaction of each obstacle, one must account for a large number of inter-body interactions, and the resulting global linear system couples all scatterers simultaneously. As the number of bodies increases, the costs of matrix assembly, storage, and iterative solution can grow rapidly. Existing methods for multi-particle scattering in a variety of settings, including layered media, electromagnetic scattering, and elastic scattering, demonstrate both the importance and the difficulty of this regime \cite{article52,article55,article53,article54}. An effective solver must balance two distinct requirements: accurate treatment of singular or nearly singular self-interactions, and efficient handling of the many smooth inter-body interactions. Much previous work has focused on scattering by axisymmetric objects because  their geometry allows substantial analytical and numerical simplification \cite{article14, article59, article19,article20}.  The key advantage of the axisymmetric setting is rotational invariance: the surface integral equation on a two-dimensional surface can be decomposed into a sequence of one-dimensional integral equations posed on the generating curve \cite{article46,article47}. This reduction is enabled by modal Green's functions, which expand the three-dimensional Helmholtz kernel into azimuthal Fourier modes and thereby decouple the problem mode by mode \cite{article59}. Building on this idea, high-order discretizations for axisymmetric scattering have been developed using panel-based Nystr\"om schemes and FFT acceleration for the evaluation of modal kernels \cite{article26,article28,article25}. 

However, many geometries of practical interest are not exactly axisymmetric. Instead, they are quasi-axisymmetric: a local cross section rotates around a curved centerline rather than a straight axis. Figure \ref{quasiaxisymmetric} illustrates such a geometry, which arises in applications such as neural spirals, coiled waveguides, and optical or acoustic fibers. Quasi-axisymmetric structures preserve periodicity in the azimuthal direction, but they lose the global rotational symmetry that makes the axisymmetric case so convenient. As a result, the azimuthal modes are no longer decoupled, the modal kernels depend on the full geometry, and traditional dimension-reduction arguments do not apply directly. This combination of retained periodicity and broken symmetry makes quasi-axisymmetric scattering  numerically more challenging compared to the axisymmetric case. Similar geometries have been considered in other physical contexts as well. In particular, for the Stokes problem on slender bodies, \cite{article5} developed a high-order Nystr\"om framework based on precomputed quadrature rules adapted to modal kernels. The work demonstrates that it is possible to preserve high-order accuracy even when exact axisymmetry is absent, provided that the discretization exploits the remaining geometric structure. Nevertheless, for acoustic scattering by quasi-axisymmetric objects, especially in the multiple scattering setting, a comparable high-order boundary integral framework has remained unavailable.

Therefore, the objective of this paper is to develop a high-order boundary integral solver for acoustic scattering by quasi-axisymmetric structures and then extend it systematically to multiple scattering. The starting point is a single-body Nystr\"om discretization that exploits azimuthal periodicity through trigonometric interpolation and modal Green's functions. Singular and nearly singular self-interactions are handled by a combination of kernel splitting, FFT-based modal evaluation, and generalized Gaussian quadrature. The multiple scattering solver is then built by coupling the single-body discretizations through inter-body boundary integral interactions, so that the difficult self-interaction blocks can be treated accurately while the smooth inter-body blocks can be assembled efficiently and reused when geometric repetition is present.

In particular, we show how the modal Green's function that is standard for axisymmetric scattering can be generalized to quasi-axisymmetric geometries without relying on full rotational symmetry. We also develop a high-order Nystr\"om discretization that remains effective for both the single scattering and multiple scattering problems. In addition, we provide a multiple scattering formulation that is built directly from the single-scatterer solver, thereby preserving high-order accuracy while exposing block structure that can be exploited computationally. We also establish a convergence analysis for the proposed method on smooth geometries. These developments are demonstrated by numerical experiments for solving scattering problems involving up to 1000 quasi-axisymmetric scatterers.


\begin{figure}[!t]
	\centering
    \vspace{-0.5cm}
	\includegraphics[width=5.5in]{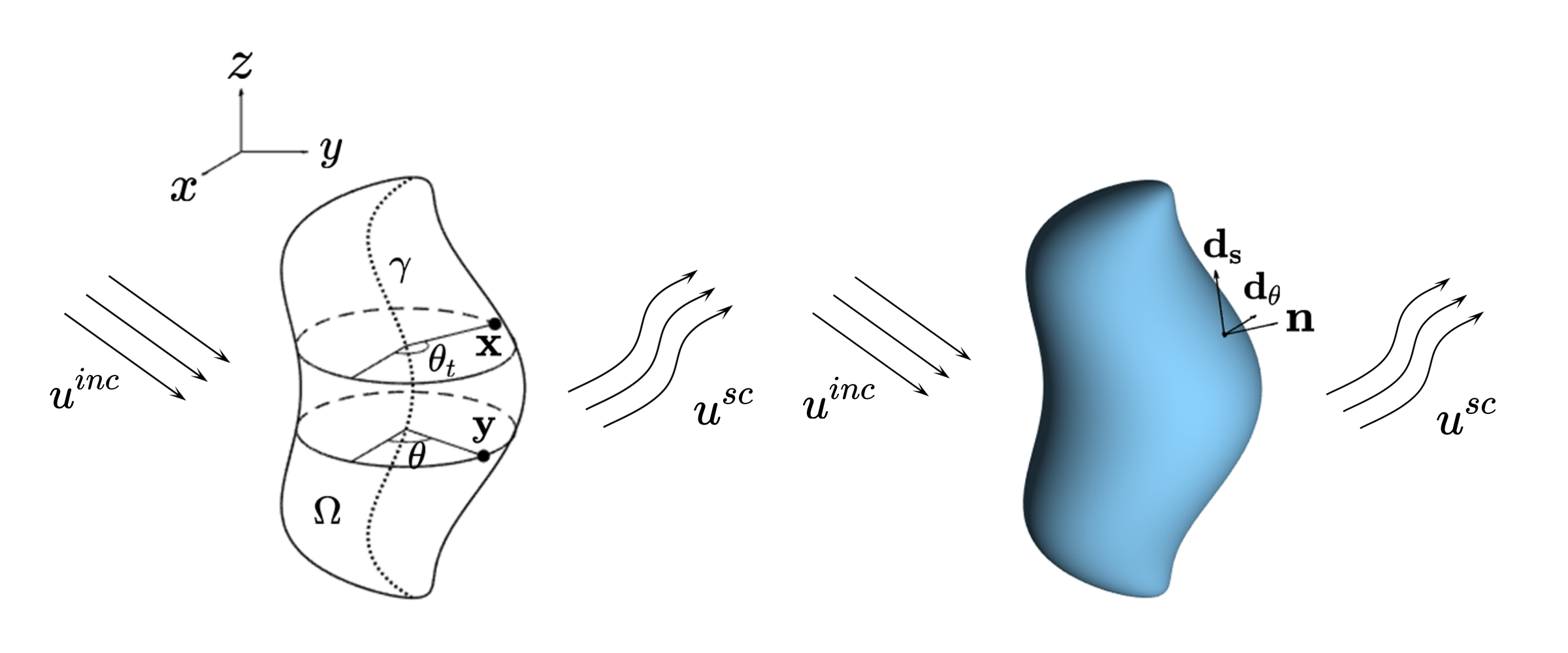}
    \vspace{-0.2cm}
\caption{Geometry of quasi-axisymmetric structures}
	\label{quasiaxisymmetric}
\end{figure}

The remainder of this paper is organized as follows. Section 2 presents the boundary integral formulation of the problem. Section 3 describes the Nystr\"om discretization and the associated numerical algorithms for quasi-axisymmetric structures. Section 4 extends the single scattering framework to multiple scattering problems. Section 5 provides a convergence analysis for smooth geometries. Section 6 presents numerical results and a discussion of multiple scattering by scatterers under random rotations and translations. Finally, Section 7 concludes the paper.

\section{Integral equation formulations}
Consider a bounded quasi-axisymmetric obstacle $\Omega\subset\mathbb{R}^3$ with a smooth boundary $\partial\Omega$ parameterized by
\begin{equation}\label{para_quasi}
    \partial\Omega := \left( r(s)\cos\theta+p(s), r(s)\sin\theta+q(s), z(s) \right)
\end{equation}
where $s \in [0,T]$, $\theta \in [0,2\pi]$. 
We refer to such a geometry as quasi-axisymmetric, as it can be generated by sweeping a circular profile of radius $r(s)$ along the generating centerline $\gamma = (p(s),q(s),z(s))$. 
While we assume a smooth boundary to simplify the discussion, our numerical approach remains applicable to non-smooth geometries. 
We focus on the exterior scattering problem under sound-soft boundary conditions, as the extension to other cases is mathematically straightforward. 
Assume that the obstacle is embedded in a homogeneous medium and is illuminated by a time-harmonic incident wave $u^{inc}$, inducing a scattering field $u^{sc}$. 
In this case, the total field $u=u^{inc}+u^{sc}$ satisfies the Helmholtz equation:
\begin{equation}\label{helmholtz_equ}
\left\{
\begin{aligned}
& \Delta u+k^2u=0 \mbox{ in } \mathbb{R}^3\backslash \overline{\Omega},\\
& u=0 \mbox{ on } \partial\Omega,
\end{aligned}
\right.
\end{equation}
where $k>0$ is the wavenumber of the background medium. 
The incident field $u^{inc}$ is a time harmonic plane wave $u^{inc}=e^{ik\mathbf{d}\cdot\mathbf{x}}$, where the vector $\mathbf{d}\in \mathbb{R}^3$ is the direction of propagation. The scattering field $u^{sc}$ satisfies the Sommerfeld radiation condition:
\begin{equation}\label{sommer_rad}
\lim_{r\to\infty}
r
\left(
\frac{\partial u^{sc}}{\partial r}
-
iku^{sc}
\right)
=0,
\end{equation}
where $r=|\mathbf{x}|$.

We first introduce two boundary integral operators. Let $n(\mathbf{x})$ be the unit exterior normal vector on $\partial\Omega$. We refer to $S$ and $D$ as the single-layer and double-layer potential operators, respectively, which are defined as
\begin{eqnarray}
    S[\mu](\mathbf{x}):&=&\int_{\partial \Omega}G^k(\mathbf{x},\mathbf{y})\mu(\mathbf{y})ds(\mathbf{y}),\quad \mathbf{x}\in \mathbb{R}^3\backslash\partial\Omega, \\
    D[\mu](\mathbf{x}):&=&\int_{\partial \Omega}\frac{\partial G^k(\mathbf{x},\mathbf{y})}{\partial n(\mathbf{y})}\mu(\mathbf{y})ds(\mathbf{y}),\quad \mathbf{x}\in \mathbb{R}^3\backslash\partial\Omega,
\end{eqnarray}
where $\mu(\mathbf{x})\in C(\partial \Omega)$ is the density function, and $G^k(\mathbf{x},\mathbf{y})$ is the fundamental solution of the three-dimensional Helmholtz equation:
\begin{equation}
\label{Green}
G^k(\mathbf{x},\mathbf{y})=\frac{e^{ik\left|\mathbf{x}-\mathbf{y} \right|}}{4\pi\left|\mathbf{x}-\mathbf{y} \right|}.
\end{equation}
Note that both $S[\mu]$ and $D[\mu]$ are solutions of the Helmholtz equation and automatically satisfy the Sommerfeld radiation condition \eqref{sommer_rad}. When $\mathbf{x} \in \mathbb{R}^3\backslash \overline{\Omega}$ approaches a point $\mathbf{x}_0$ on the boundary $\partial\Omega$, the single- and double-layer potentials satisfy the jump property \cite{article13}:
\begin{eqnarray}
    \label{jump1}    \lim\limits_{\mathbf{x}\to\mathbf{x}_0}S[\mu](\mathbf{x})&=&\int_{\partial \Omega}G^k(\mathbf{x}_0,\mathbf{y})\mu(\mathbf{y})ds(\mathbf{y})=\mathcal{S}[\mu][\mathbf{x}_0],\\
     \label{jump3}   \lim\limits_{\mathbf{x}\to\mathbf{x}_0}D[\mu](\mathbf{x})&=&\frac{1}{2}\mu(\mathbf{x}_0)+\int_{\partial \Omega}\frac{\partial G^k(\mathbf{x}_0,\mathbf{y})}{\partial n(\mathbf{y})}\mu(\mathbf{y})ds(\mathbf{y}) = \frac{1}{2}\mu(\mathbf{x}_0)+\mathcal{D}[\mu][\mathbf{x}_0],
\end{eqnarray}
where $\mathcal{S}$ and $\mathcal{D}$ are the corresponding boundary operators for the single-layer and double-layer potentials.
By potential theory, we can represent the scattered field $u^{sc}$ using either the single-layer potential $S[\mu]$ or the double-layer potential $D[\mu]$. Using the boundary condition on $\partial \Omega$, the integral formulation $u^{sc}=S[\mu]$ leads to a Fredholm integral equation of the first kind:
\begin{equation}\label{single-layer-bie}
\mathcal{S}[\mu][\mathbf{x}] = -u^{inc}(\mathbf{x}),\quad \mathbf{x}\in \partial\Omega. 
\end{equation} 
The double-layer potential formulation $u^{sc}=D[\mu]$ leads to a Fredholm integral equation of the second kind:
\begin{equation}\label{double-layer-bie}
\left(\frac{1}{2}+\mathcal{D}\right)[\mu][\mathbf{x}]=-u^{inc}(\mathbf{x}), \quad \mathbf{x}\in \partial\Omega.
\end{equation} 

In the following sections, we discuss discretizations for these two formulations on quasi-axisymmetric geometries.
\begin{remark}
    For simplicity, we assume that there is no resonance for the interior boundary value problem \eqref{helmholtz_equ}. If resonance occurs, one can switch to the combined layer potential~\cite{article41}
    \begin{eqnarray}
        u^{sc}(\mathbf{x}) = (D-i S) [\mu](\mathbf{x}).
    \end{eqnarray}
   Its discretization follows directly from the discretization of \eqref{single-layer-bie} and \eqref{double-layer-bie}.
\end{remark}  
 

\section{Discretizations of boundary integral equations}
\subsection{Single-layer formulation}
For a target point $\mathbf{x}$ and a source point $\mathbf{y}$ on $\Gamma$, let
$$
\begin{cases}
	\begin{aligned}
		\mathbf{x}&=(x_{t},y_{t},z_{t})=(r(t)\cos(\theta_{t})+p(t),r(t)\sin(\theta_{t})+q(t),z(t)),\\
		\mathbf{y}&=(x,y,z)=(r(s)\cos(\theta)+p(s),r(s)\sin(\theta)+q(s),z(s)),
	\end{aligned}
\end{cases}
$$
where $t,s\in [0,T]$ and $\theta_t,\theta\in [0,2\pi]$.
Denote $\mathbf{d}_s(\mathbf{y})$ and $\mathbf{d}_\theta(\mathbf{y})$ the tangent vectors in the polar direction $s$ and the azimuthal direction $\theta$, respectively:
\begin{eqnarray*}
	\mathbf{d}_s(\mathbf{y}) &=&        
	\left(
	r'(s)\cos(\theta) +p'(s),
	r'(s)\sin(\theta) +q'(s) ,
	z'(s) 
	\right),\\
	\mathbf{d}_\theta(\mathbf{y}) &=&     
	\left(
	-r(s)\sin(\theta) ,
	r(s)\cos(\theta) ,
	0 
	\right).
\end{eqnarray*}
Using the parameterization \eqref{para_quasi} of $\partial\Omega$ over $[0,T]\times[0,2\pi]$, the surface integral \eqref{single-layer-bie} can be written as the double integral
\begin{equation}\label{smu}
   \int_{\partial \Omega}G^k(\mathbf{x},\mathbf{y})\mu(\mathbf{y})ds(\mathbf{y})
    =\int_0^T\int_0^{2\pi}\frac{e^{ik|\mathbf{x}-\mathbf{y}|}}{4\pi|\mathbf{x}-\mathbf{y}|}\mu(\mathbf{y})|\mathrm{J}(\mathbf{y})|d\theta ds,
\end{equation}
where $
|\mathrm{J}(\mathbf{y})|
=|\mathbf{d}_s(\mathbf{y})\times \mathbf{d}_\theta (\mathbf{y})|
$ is the Jacobian factor. 

To evaluate the integral \eqref{smu} numerically, we partition $s\in[0,T]$ into $N_P$ panels, denoted by $\gamma_1,\cdots,\gamma_{N_P}$, and discretize each panel using $N_{\mathrm{order}}$ Gauss-Legendre nodes. In the azimuthal direction, we use a uniform grid of $2N$ points,
\begin{eqnarray*}
\theta_j=\frac{j\pi }{N},\quad j=0,1,\cdots,2N-1.
\end{eqnarray*}
Let $\mathbf{x}$ be a target point located in the $p$th panel, so that $\mathbf{x}$ lies in the parameter domain $\gamma_p\times[0,2\pi]$. When $\mathbf{x}$ and the source point $\mathbf{y}$ in \eqref{smu} are well separated, we apply the trapezoidal rule in the azimuthal direction and composite Gauss-Legendre quadrature in the polar direction. Specifically,
\begin{eqnarray}
S[\mu](\mathbf{x})&\approx&\sum\limits_{i=1,i\neq p-1,p,p+1}^{N_P}\sum\limits_{l=1}^{N_{order}}\sum\limits_{j=0}^{2N-1}\frac{e^{ik|\mathbf{x}-\mathbf{y}_{ijl}|}}{4\pi|\mathbf{x}-\mathbf{y}_{ijl}|}\mu(\mathbf{y}_{ijl})|\mathrm{J}(\mathbf{y}_{ijl})|\frac{\pi}{N}\omega_{il}\nonumber\\
\label{sx}
&&+\int_{\gamma_{p-1}\cup \gamma_{p}\cup\gamma_{p+1}}\int_0^{2\pi}\frac{e^{ik|\mathbf{x}-\mathbf{y}|}}{4\pi|\mathbf{x}-\mathbf{y}|}\mu(\mathbf{y})|\mathrm{J}(\mathbf{y})|d\theta ds,
\end{eqnarray}
where $\omega_{il}$ is the rescaled Gaussian weight corresponding to the $l$th Gauss-Legendre node in the $i$th panel. When $\mathbf{y}$ lies on the panels $\gamma_{p-1}$, $\gamma_p$, or $\gamma_{p+1}$, the second integral on the right-hand side of \eqref{sx} becomes singular or nearly singular. Applying the same quadrature rule as in the well-separated case would therefore introduce substantial error, so these three panels must be treated separately.


We first consider the integral over the panel $\gamma_p$, and the treatment of the two adjacent panels is analogous. By interpolation on Gauss-Legendre nodes in $s$ and equally spaced points in $\theta$, we obtain the approximation
\begin{eqnarray}
\mu(\mathbf{y})|\mathrm{J}(\mathbf{y})|\approx\sum\limits_{l=1}^{N_{order}}\sum\limits_{j=0}^{2N-1}|\mathrm{J}(\mathbf{y}_{pjl})|\mu(\mathbf{y}_{pjl})P_l(s)L_j(\theta), 
\end{eqnarray}
where $P_l(s)$ is the $l$th scaled Lagrange basis polynomial associated with the Gauss-Legendre nodes on $\gamma_p$, and $L_j(\theta)$ is the $j$th trigonometric Lagrange basis function,
\begin{eqnarray*}
L_j(\theta)=\frac{1}{2N}\left(1+2\sum\limits_{m=1}^{N-1}\cos m(\theta-\theta_j)+\cos N(\theta-\theta_j)\right),\quad \theta\in[0,2\pi],\ j=0,1,\cdots,2N-1,
\end{eqnarray*}
or, equivalently \cite{article13},
\begin{eqnarray*}
L_j(\theta)=\begin{cases}\frac{1}{2N}\sin N(\theta-\theta_j)\cot \frac{\theta-\theta_j}{2},\quad \theta\ne \theta_j,\\
1,\quad \theta= \theta_j. \end{cases}
\end{eqnarray*}
Accordingly, the integral over $\gamma_p$ can be approximated by
\begin{eqnarray}
&&\int_{\gamma_{p}}\int_0^{2\pi}\frac{e^{k|\mathbf{x}-\mathbf{y}|}}{4\pi|\mathbf{x}-\mathbf{y}|}\mu(\mathbf{y})|\mathrm{J}(\mathbf{y})|d\theta ds\nonumber\\
\label{muJ}
&\approx&\sum\limits_{l=1}^{N_{order}}\sum\limits_{j=0}^{2N-1}\mu(\mathbf{y}_{pjl})|\mathrm{J}(\mathbf{y}_{pjl})|\int_{\gamma_p}P_l(s)\int_0^{2\pi}\frac{e^{ik|\mathbf{x}-\mathbf{y}|}}{4\pi|\mathbf{x}-\mathbf{y}|}L_j(\theta)d\theta ds.
\end{eqnarray}
The problem is thus reduced to the efficient and accurate evaluation of
\begin{eqnarray}
\label{coef_ele}
\int_{\gamma_p}P_l(s)\int_0^{2\pi}\frac{e^{ik|\mathbf{x}-\mathbf{y}|}}{4\pi|\mathbf{x}-\mathbf{y}|}L_j(\theta)d\theta ds.
\end{eqnarray}
It turns out the inner integral can be evaluated in essentially the same manner as in axisymmetric scattering \cite{article26, article25}, namely through the computation of modal Green's functions, defined by
\begin{eqnarray}\label{modalgreen}
G_m^{(k)}=\int_0^{2\pi}\frac{e^{ik|\mathbf{x}-\mathbf{y}|}}{4\pi|\mathbf{x}-\mathbf{y}|}e^{-im\theta}d\theta,\quad m=-N,\cdots,0,1,\cdots,N-1.
\end{eqnarray}
A detailed discussion on the evaluation of equation \eqref{modalgreen} is given in subsection \ref{subsec:modalgreen}.  If these modal Green's functions can be evaluated efficiently, then the inner integral in \eqref{coef_ele} is simply
\begin{eqnarray*}
\int_0^{2\pi}\frac{e^{ik|\mathbf{x}-\mathbf{y}|}}{4\pi|\mathbf{x}-\mathbf{y}|}L_j(\theta)d\theta
=\left\{\mathcal{D}^{-1}[G_m^{(k)}]\right\}_{j},\quad j=0,\cdots,2N-1,
\end{eqnarray*}
where $\mathcal{D}^{-1}$ denotes the inverse $\mathrm{DFT}$.

\subsection{Double-layer formulation}

Using the jump relation \eqref{jump3}, the integral equation associated with the double-layer potential can be written as
\begin{equation}
\label{double}
\frac{1}{2}\mu(\mathbf{x})
+
\int_{\partial \Omega}
\frac{\partial G^k(\mathbf{x},\mathbf{y})}
{\partial n(\mathbf{y})}
\mu(\mathbf{y})ds(\mathbf{y})
=
-u^{inc}(\mathbf{x}).
\end{equation}
Denote by $\{e_r,e_\theta,e_z\}$ the cylindrical coordinate basis. Since the unit normal vector is orthogonal to all tangent vectors on the surface, we have
$n\cdot e_\theta=0$. Hence it can be written as $n=n_r e_r+n_z e_z$. 
The gradient operator admits the following representation in cylindrical coordinates
\begin{equation}
\nabla_y = e_r\frac{\partial}{\partial r} + e_\theta\frac1r\frac{\partial}{\partial\theta} + e_z\frac{\partial}{\partial z}.
\end{equation}
Therefore, by the definition of the normal derivative and using the orthogonality relations of the cylindrical basis vectors, we obtain
\begin{equation} \label{normal-derivative}
\frac{\partial G^k(\mathbf x,\mathbf y)}
{\partial n(\mathbf y)}
=
\nabla_y G^k(\mathbf x,\mathbf y)
\cdot n(\mathbf y)
=
n_r\frac{\partial G^k}{\partial r}+n_z\frac{\partial G^k}{\partial z}.
\end{equation}
Let $\rho=|\mathbf{x}-\mathbf{y}|$ be the distance between the target and source points, and differentiate $G^k$ with respect to $\rho$ yields
\begin{equation}
\label{dgdrho}
\frac{dG^k}{d\rho}
= \frac{e^{ik\rho}(ik\rho-1)}{4\pi\rho^2}.
\end{equation}
For convenience, denote $r=r(s)$ and $z=z(s)$. It holds
\begin{equation}
\label{rho2}
\rho^2
=|\mathbf{x}-\mathbf{y}|
=a^2+b^2+r^2+c^2-2r(a\cos\theta+b\sin\theta),
\end{equation}
where
\[
a=x_t-p(s),\quad
b=y_t-q(s),\quad
c=z_t-z(s),
\]
Let
$\tilde r_t=\sqrt{a^2+b^2},
\quad
\alpha=\arg(a+ib)$.
Then, we have
\[
a=\tilde r_t\cos\alpha,
\quad
b=\tilde r_t\sin\alpha,
\]
which implies
\[
a\cos\theta+b\sin\theta
=
\tilde r_t\cos(\theta-\alpha).
\]
Introducing the shifted angular variable $\phi=\theta-\alpha$, equation \eqref{rho2} can be rewritten as
\begin{equation}
\label{rho-standard}
\rho^2=\tilde r_t^2+r^2+c^2-2r\tilde r_t\cos\phi.
\end{equation}
Differentiating equation \eqref{rho-standard} with respect to the source variables gives
\[
\partial_r\rho =\frac{r-\tilde r_t\cos\phi}{\rho},
\quad
\partial_z\rho=-\frac{z_t-z}{\rho}.
\]
Using the chain rule together with equation \eqref{dgdrho}, we obtain
\begin{equation}
\partial_r G^k
=
\frac{e^{ik\rho}(ik\rho-1)}{4\pi\rho^3}(r-\tilde r_t\cos\phi),\quad
\partial_z G^k=-\frac{e^{ik\rho}(ik\rho-1)}{4\pi\rho^3}(z_t-z).
\end{equation}
The corresponding Fourier modal Green's functions are defined by
\[
\partial_r \hat{G}_m^{(k)}
=
\int_0^{2\pi}
\frac{\partial G^k(\mathbf{x},\mathbf{y})}
{\partial r}
e^{-im\theta}\,d\theta,
\quad
\partial_z \hat{G}_m^{(k)}
=
\int_0^{2\pi}
\frac{\partial G^k(\mathbf{x},\mathbf{y})}
{\partial z}
e^{-im\theta}\,d\theta.
\]
By equation \eqref{normal-derivative}, it holds
\begin{equation}
\label{modal-normal}
\partial_n \hat{G}_m^{(k)}
=
n_r\partial_r \hat{G}_m^{(k)}
+
n_z\partial_z \hat{G}_m^{(k)}.
\end{equation}
Consequently, the double-layer potential can also be represented through the modal Green's functions and their source-point derivatives. They are discretized using the periodic trapezoidal rule on equispaced nodes and evaluated efficiently via the Fast Fourier Transform (FFT), as in the single-layer formulation case.

 In the next subsection, we discuss how to evaluate the modal Green's functions together with their derivative modal quantities efficiently using recurrence formulas and FFT-based convolution techniques. 

\subsection{Evaluation of the modal Green's functions}\label{subsec:modalgreen}
Modal Green's functions have a long history in boundary integral methods for bodies of revolution \cite{article26,article59,article25}. In the present quasi-axisymmetric setting, global rotational symmetry is lost, but these modal Green's functions remain useful locally because the surface retains an azimuthal periodic parameter.  Many approaches have been proposed for the efficient evaluation of modal Green's functions, and we refer readers to \cite{article58} for a comprehensive discussion.  Here we adopt the technique based on recursion formulas and convolution \cite{article25} to evaluate \eqref{modalgreen} when $\rho=|\mathbf{x}-\mathbf{y}|$ is small. 

We first consider the case $k=0$. 
Recall from equation \eqref{rho-standard} that the distance function depends on the angular variable through the term \(2r\tilde r_t\cos\phi\). Factoring out \(2r\tilde r_t\), we rewrite
\[
\rho^2
=
2r\tilde r_t
\left(
\frac{
\tilde r_t^2+r^2+c^2
}{
2r\tilde r_t
}
-
\cos\phi
\right).
\]
Defining
$\chi
=
\frac{
\tilde r_t^2+r^2+c^2
}{
2r\tilde r_t
}$, the distance function admits the representation
\[
\rho
=
\sqrt{2r\tilde r_t}
\sqrt{\chi-\cos\phi}.
\]
This representation extracts the angular singularity of the kernel into the canonical factor $(\chi-\cos\phi)^{-1/2}$, which admits an explicit representation in terms of Legendre functions.

In other words, it holds
\begin{eqnarray}\label{modal0}
\int_0^{2\pi}\frac{1}{4\pi\rho}\cos m(\theta-\theta_j)d\theta 
&=&\frac{1}{2\pi\sqrt{\tilde{r_t}r}}\int_0^{2\pi} \frac{\cos m(\theta-\theta_j)}{\sqrt{8(\chi-\cos(\theta-\alpha))}}d\theta\notag\\
&=&\frac{\cos(m(\alpha-\theta_j))}{2\pi\sqrt{\tilde{r_t}r}}\int_0^{2\pi} \frac{\cos m\theta}{\sqrt{8(\chi-\cos\theta)}}d\theta\notag\\
&=&\frac{\cos(m(\alpha-\theta_j))}{2\pi\sqrt{\tilde{r_t}r}}\mathcal{Q}_{m-\frac{1}{2}}(\chi),
\end{eqnarray}
where ${Q}_{m-\frac{1}{2}}$ is the Legendre function of the second kind of half-degree \cite{article37}. It is well known that ${Q}_{m-\frac{1}{2}}$ and its derivative can be evaluated through the recursion formulas:
\begin{eqnarray} 
\label{leg_recur}
\mathcal{Q}_{m-1/2}(\chi)&=&4\frac{m-1}{2m-1}\chi\mathcal{Q}_{m-3/2}(\chi)-\frac{2m-3}{2m-1}\mathcal{Q}_{m-5/2}(\chi),\\
\label{dleg_recur}
\frac{\partial \mathcal{Q}_{m-1/2}(\chi)}{\partial \chi}&=&\frac{2m-1}{2(\chi^2-1)}(\chi\mathcal{Q}_{m-1/2}(\chi)-\mathcal{Q}_{m-3/2}(\chi)),
\end{eqnarray}
with 
\begin{eqnarray*}
    \mathcal{Q}_{-1/2}(\chi)&=&\mu K(\mu),\\
\mathcal{Q}_{1/2}(\chi)&=&\chi\mu K(\mu)-\sqrt{2(\chi+1)}E(\mu),
\end{eqnarray*}
where $\mu=\sqrt{2/(\chi+1)}$, and $K$ and $E$ are the complete elliptic integrals of the first and second kinds, respectively \cite{article37}. 

To evaluate $G_m^{(k)}$ for $k\neq0$, we define
\begin{eqnarray*}
f_1(\theta)=\cos(k\rho (\theta)),\quad f_2(\theta)=\frac{\sin(k\rho (\theta))}{4\pi\rho (\theta)},\quad
g(\theta)=\frac{1}{4\pi\rho (\theta)}.
\end{eqnarray*}
Then $G_m^{(k)}$ can be decomposed into two parts:
\begin{eqnarray}
\label{conv}
\int_0^{2\pi}\frac{e^{ik\rho}}{4\pi\rho}e^{-im\theta}d\theta
&=&\int_0^{2\pi}f_1(\theta)g(\theta)e^{-im\theta}d\theta+i\int_0^{2\pi}f_2(\theta)e^{-im\theta}d\theta\notag \\&=&I_1+iI_2.
\end{eqnarray}
After removing the removable singularity at $\rho=0$ and using a Taylor expansion, we see that $f_2(\theta)$ is analytic in $\theta$. Therefore, $I_2$ can be evaluated directly on the equally spaced grid $\{\theta_j\}$ and accelerated efficiently by the FFT.

To evaluate $I_1$, we apply a convolution formula together with the modal Green's function \eqref{modal0}. Specifically,
\begin{eqnarray}\label{I1}
I_1
&=&\frac{1}{2\pi}\sum\limits_{n=-\infty}^{\infty}\hat{f}_{1,n}\hat{g}_{m-n},
\end{eqnarray}
where $\hat{f}_{1,n}$ and $\hat{g}_n$ are the Fourier coefficients of $f_1$ and $g$, respectively:
\begin{eqnarray}
\hat{f}_{1,n}=\int_0^{2\pi}f_1(\theta)e^{-in\theta}d\theta,\quad
\hat{g}_n=\int_0^{2\pi}g(\theta)e^{-in\theta}d\theta.
\end{eqnarray} 
The summation in \eqref{I1} converges rapidly, as $\hat{f}_{1,n}$ decays very fast. This completes the evaluation of the modal Green's function. 

For the derivatives of the modal Green's functions, we again consider the static case $k=0$. Recall that
\[
\hat{G}_m^{(0)}
=
\frac{
Q_{m-\frac12}(\chi)
}{
2\pi\sqrt{\tilde r_t r}
}.
\]
Here $\tilde r_t$ is fixed in the local modal evaluation, while $r$ and $z$ are the source variables. Differentiating with respect to the source variables gives
\[
\partial_r\hat{G}_m^{(0)}
=
\frac{
Q'_{m-\frac12}(\chi)
}{
2\pi\sqrt{\tilde r_t r}
}
\partial_r\chi
-
\frac1{2r}
\hat{G}_m^{(0)},
\quad
\partial_z\hat{G}_m^{(0)}
=
\frac{
Q'_{m-\frac12}(\chi)
}{
2\pi\sqrt{\tilde r_t r}
}
\partial_z\chi.
\]
where
\[
\partial_r\chi
=
\frac{
r^2-\tilde r_t^2-c^2
}{
2\tilde r_t r^2
},
\quad
\partial_z\chi
=
-\frac{
z_t-z
}{
r\tilde r_t
}.
\]
The derivative of the Legendre function can be evaluated by \eqref{dleg_recur}. For $k\neq0$, the derivative kernels are again decomposed into convolutions between the static modal kernels and smooth oscillatory correction factors. After removing the removable singularities by Taylor expansion, the remaining smooth factors are evaluated on an equispaced grid and combined by FFT-based convolutions. Consequently, the derivative modal quantities
$\partial_r\hat{G}_m^{(k)}$, $\partial_z\hat{G}_m^{(k)}$ and $\partial_n\hat{G}_m^{(k)}$ can be evaluated efficiently in the same modal framework. Further discussion on this approach, including how to overcome the stability issue, can be found in \cite{article26}.

\subsection{Evaluation of the outer integral}
After the inner integral in \eqref{coef_ele} has been evaluated, the remaining task is to compute the outer integral
\begin{eqnarray*}
\tilde{I}_{lj}(\mathbf{x}) := \int_{\gamma_p} P_l(s) G_j(t,s)\, ds,
\mbox{ with }
G_j(t,s) = \int_0^{2\pi} \frac{e^{ik|\mathbf{x}-\mathbf{y}|}}{4\pi|\mathbf{x}-\mathbf{y}|} L_j(\theta)\, d\theta.
\end{eqnarray*}
The function $G_j(t,s)$ is weakly singular when the target point lies on or near the source panel, so this integral is evaluated using precomputed high-accuracy generalized Gaussian quadrature \cite{article48}. The number of quadrature nodes depends on the target location. The construction of such quadrature nodes was obtained through nonlinear optimization  with code available at \url{github.com/JamesCBremerJr/GGQ}. A detailed scheme for applying these generalized quadratures to the single-layer boundary operator can be found in \cite{article59}. The extension to the double-layer case is analogous. In the numerical examples below, we use a 16th-order generalized Gaussian rule, with $16$ target-dependent nodes and weights for the self panel and $48$ target-independent nodes and weights for adjacent panels.
This procedure yields the discretized matrix for the integral equation, reducing the scattering problem to a linear system of size $N_{\mathrm{tot}}=N_p\times N_{\mathrm{order}}\times (2N)$.

\section{Extension to multiple scattering}
\label{multiple_objects}
The algorithms described in the previous section solve boundary integral equations posed on a single quasi-axisymmetric surface in $\mathbb{R}^3$. In this section, we extend the framework to multiple scatterers. In contrast to the single body case, the formulation must now account for both self-interaction and inter-body scattering.

We consider multiple objects that are identical in shape and size. Extension to different shapes or sizes will be discussed later. Without loss of generality, suppose that there are two geometries, $\Omega_1$ and $\Omega_2$, which are congruent under rigid motions (rotations and translations). We further assume that the objects are well separated, so that near-field interactions do not occur:
\begin{eqnarray*}
\min\limits_{\mathbf{x}\in\Omega_1,\mathbf{y}\in\Omega_2}\{|\mathbf{x}-\mathbf{y}|\}>=\alpha,
\end{eqnarray*}
where $\alpha>0$ is a constant greater than half the wavelength.
For the exterior Dirichlet problem, we represent the scattered field by the double-layer potential
\begin{eqnarray}
	u(\mathbf{x})=\int_{\partial \Omega_1}\frac{\partial G^{k}(\mathbf{x},\mathbf{y})}{\partial n(\mathbf{y})}\mu (\mathbf{y})ds(\mathbf{y})+\int_{\partial \Omega_2}\frac{\partial G^{k}(\mathbf{x},\mathbf{y})}{\partial n(\mathbf{y})}\mu (\mathbf{y})ds(\mathbf{y}), 
\end{eqnarray}
and obtain the following boundary integral equation:
\begin{eqnarray}
	\label{multi-double}
	\frac{1}{2}\mu(\mathbf{x})+\int_{\partial \Omega_1\cup\partial\Omega_2}\frac{\partial G^{k}(\mathbf{x},\mathbf{y})}{\partial n(\mathbf{y})}\mu (\mathbf{y})ds(\mathbf{y})=-u^{inc}(\mathbf{x}),\quad \mathbf{x}\in \partial \Omega_1\cup\partial\Omega_2.
\end{eqnarray}
When the target point $\mathbf{x}$ lies on $\partial\Omega_1$, the first contribution is
\begin{eqnarray*}
	\int_{\partial \Omega_1}\frac{\partial G^k(\mathbf{x},\mathbf{y})}{\partial n(\mathbf{y})}\mu(\mathbf{y})ds(\mathbf{y})=
	\int_0^T\int_0^{2\pi}\frac{\partial G^k(\mathbf{x},\mathbf{y})}{\partial n(\mathbf{y})}\mu(\mathbf{y})|\mathrm{J}(\mathbf{y})|d\theta ds.
\end{eqnarray*}
Its discretization is exactly the same as in the single body case. 
For the second part,
\begin{eqnarray}
\label{multi2part}
\int_{\partial \Omega_2}\frac{\partial G^{k}(\mathbf{x},\mathbf{y})}{\partial n(\mathbf{y})}\mu (\mathbf{y})ds(\mathbf{y}),
\end{eqnarray}
since $\mathbf{x}$ and $\mathbf{y}$ belong to different boundaries, the integrand in \eqref{multi2part} is smooth. We may therefore apply the trapezoidal rule in the azimuthal direction and Gauss-Legendre quadrature in the polar direction:
\begin{align*}
    &\int_{\partial \Omega_2}\frac{\partial G^{k}(\mathbf{x},\mathbf{y})}{\partial n(\mathbf{y})}\mu (\mathbf{y})ds(\mathbf{y})\\\approx&\sum\limits_{i=1}^{N_P}\sum\limits_{l=1}^{N_{order}}\sum\limits_{j=0}^{2N-1}\frac{\partial G^{k}(\mathbf{x},\mathbf{y}_{ilj})}{\partial n(\mathbf{y}_{ilj})}\mu (\mathbf{y}_{ilj})|\mathrm{J}(\mathbf{y}_{ilj})|\frac{\pi}{N}w_{il}\\
    =&\sum\limits_{i=1}^{N_P}\sum\limits_{l=1}^{N_{order}}\sum\limits_{j=0}^{2N-1}\left(\frac{i\kappa}{\left| \mathbf{x}-\mathbf{y}_{ilj} \right|}-\frac{1}{\left| \mathbf{x}-\mathbf{y}_{ilj} \right|^2}\right)\frac{e^{i\kappa\left| \mathbf{x}-\mathbf{y}_{ilj} \right|}}{4\pi\left| \mathbf{x}-\mathbf{y}_{ilj} \right|}{n}(\mathbf{y}_{ilj})\cdot(\mathbf{y}_{ilj}-\mathbf{x})\mu (\mathbf{y}_{ilj})|\mathrm{J}(\mathbf{y}_{ilj})|\frac{\pi}{N}w_{il}.
\end{align*}


Consequently, the multiple scattering problem can be written as the linear system
\begin{eqnarray}
\left(\frac12 \mathbf I+\mathbf D\right)\boldsymbol{\mu}
=
-\mathbf u^{inc},
\end{eqnarray}
where $\mathbf I$ is the $N_{\mathrm{tot}}\times N_{\mathrm{tot}}$ identity matrix. Here $N_{\mathrm{tot}}$ depends on both the discretization level of each geometry and the total number of scatterers. Let $N_{\mathrm{geo}}$ denote the number of geometries, and let $n_1,n_2,\cdots,n_{N_{\mathrm{geo}}}$ be the numbers of discrete points on their surfaces. Moreover, $\mathbf D$ is an $N_{\mathrm{tot}}\times N_{\mathrm{tot}}$ coefficient matrix composed of $N_{\mathrm{geo}}\times N_{\mathrm{geo}}$ blocks. The diagonal blocks represent self-interactions of individual objects, whereas the off-diagonal blocks represent interactions between distinct scatterers. The linear system can be written in block form as
\begin{eqnarray}\label{linearsys}
\frac12
\begin{bmatrix}
\boldsymbol{\mu}_1 \\
\boldsymbol{\mu}_2 \\
\vdots \\
\boldsymbol{\mu}_{N_{\mathrm{geo}}}
\end{bmatrix}
+
\begin{bmatrix}
\mathbf D_{1,1} & \mathbf D_{1,2} & \cdots & \mathbf D_{1,N_{\mathrm{geo}}} \\
\mathbf D_{2,1} & \mathbf D_{2,2} & \cdots & \mathbf D_{2,N_{\mathrm{geo}}} \\
\vdots & \vdots & \ddots & \vdots \\
\mathbf D_{N_{\mathrm{geo}},1} & \mathbf D_{N_{\mathrm{geo}},2} & \cdots & \mathbf D_{N_{\mathrm{geo}},N_{\mathrm{geo}}}
\end{bmatrix}
\begin{bmatrix}
\boldsymbol{\mu}_1 \\
\boldsymbol{\mu}_2 \\
\vdots \\
\boldsymbol{\mu}_{N_{\mathrm{geo}}}
\end{bmatrix}
=
-
\begin{bmatrix}
\mathbf u^{inc}_1 \\
\mathbf u^{inc}_2 \\
\vdots \\
\mathbf u^{inc}_{N_{\mathrm{geo}}}
\end{bmatrix}.
\end{eqnarray}
Here $\mathbf D_{t',t}$ denotes the matrix obtained from the discretization of the interaction operator
\[
D_{t',t}[\mu]
=
\int_{\partial \Omega_t}
\frac{\partial G^{k}(\mathbf{x},\mathbf{y})}{\partial n(\mathbf{y})}
\mu(\mathbf{y})\,ds(\mathbf{y}),
\quad
\mathbf{x}\in\partial\Omega_{t'}.
\]
A direct computational benefit of assuming identical geometries is that all self-interaction matrices $\mathbf D_{t,t}$ are identical. Hence, it suffices to compute $\mathbf D_{1,1}$ once and reuse it for every diagonal block, leading to a substantial reduction in assembly cost.

In particular, when the scatterers are arranged in a regular $P\times Q\times R$ array with uniform spacing $\delta$, the interaction matrices inherit a multilevel block Toeplitz structure. Because the scatterers are identical and the Green's function depends only on relative position, the interaction block between scatterers $t'$ and $t$ satisfies
\begin{equation}
\mathbf D_{t',t}
=
\mathbf D_{\mathbf d},
\quad
\mathbf d=\mathbf c_{t'}-\mathbf c_t,
\end{equation}
where $\mathbf d=(d_p,d_q,d_r)^T$ is the displacement vector between the two centers. Consequently, the number of distinct off-diagonal blocks decreases from $N_{\mathrm{geo}}(N_{\mathrm{geo}}-1)$ to
\[
(2P-1)(2Q-1)(2R-1)-1.
\]
For each displacement $\mathbf d$, one representative block is computed by quadrature, and every remaining block with the same displacement is obtained by direct memory copy. The resulting block reuse ratio is
\begin{equation}
1- \frac{(2P-1)(2Q-1)(2R-1)-1}{N_{\mathrm{geo}}(N_{\mathrm{geo}}-1)}
\sim
1-\frac{2^3}{N_{\mathrm{geo}}},
\quad
P,Q,R\to\infty.
\end{equation}
which approaches $1$ as the array grows. In the large-array limit, the assembly cost for distinct interaction blocks is therefore reduced from $\mathcal O(N_{\mathrm{geo}}^2)$ to $\mathcal O(N_{\mathrm{geo}})$.

The block reuse strategy described above relies on the assumption that all scatterers are identical and arranged on a regular array with uniform spacing, so that the interaction block $\mathbf D_{t',t}$ depends only on the displacement
\[
\mathbf d=\mathbf c_{t'}-\mathbf c_t.
\]
In more general configurations, for example when the scatterers have different shapes, arbitrary orientations, or nonuniform positions, this translational invariance no longer holds, and each interaction block $\mathbf D_{t',t}$ must be computed separately. In that case, the algorithm requires the evaluation of all $N_{\mathrm{geo}}(N_{\mathrm{geo}}-1)$ off-diagonal blocks, and the multilevel block Toeplitz structure is lost. Nevertheless, as long as the scatterer geometry remains the same, the self-interaction matrices $\mathbf D_{t,t}$ are still identical for all scatterers, regardless of their orientations or positions. In our numerical experiments, we examine such general configurations by applying random rotations and random position perturbations to each scatterer (Section~\ref{sec:random}). The off-diagonal blocks are then computed individually and in parallel to offset the loss of block reuse. Furthermore, the discussion of acceleration techniques for solving the linear system \eqref{linearsys} is also given in Section 6.


\section{Error analysis}\label{sec:error}

In this section, we analyze the convergence of the proposed Nystr\"om discretization for smooth quasi-axisymmetric surfaces. The main ingredients are the regularity of the boundary integral densities, the spectral accuracy of the trigonometric interpolation in the azimuthal direction, and the high-order accuracy of the generalized Gaussian quadrature along the generating curve. Together, these estimates show that the discretized boundary operators converge rapidly under mesh refinement when the geometry and boundary data are sufficiently smooth. Here we mainly focus on the convergence result for the double-layer boundary operator. The corresponding result for the single-layer potential can also be established by using the regularization theory \cite{article13}. We omit it for simplicity.

The following lemma gives a regularity statement for the double-layer boundary integral formulation. The solvability of Helmholtz boundary integral equations may be found in \cite{article41,article50}.
\begin{lemma}
    Assume $\partial\Omega$ is smooth, the boundary data $f$ is analytic on $\partial\Omega$, and $k$ is not an eigenvalue of the interior Helmholtz  Dirichlet or Neumann problem. Then the  solution density $\mu$ is analytic on $\partial\Omega$ for the double-layer potential boundary integral equation
\begin{equation*}
\frac{1}{2}\mu(\mathbf{x})+\int_{\partial \Omega}\frac{\partial G^k(\mathbf{x},\mathbf{y})}{\partial n(\mathbf{y})}\mu(\mathbf{y})ds(\mathbf{y})=f(\mathbf{x}), \quad \mathbf{x}\in \partial\Omega.
\end{equation*}
\end{lemma}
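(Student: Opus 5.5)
The argument has two parts: existence and uniqueness of a continuous solution, followed by a bootstrap of regularity. The key tool is the analytic-data regularity theory for elliptic boundary value problems with analytic boundaries; alternatively, one can use the mapping properties of $\mathcal{D}$ on spaces of analytic functions. I will read the smoothness hypothesis as real-analytic, which is what an analytic conclusion needs; this holds for the parameterization \eqref{para_quasi} whenever $r,p,q,z$ are analytic.

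\emph{Existence and uniqueness.} The operator $\mathcal{D}$ is compact on $C(\partial\Omega)$, because the kernel $\partial_{n(\mathbf y)}G^k(\mathbf x,\mathbf y)$ is $O(|\mathbf x-\mathbf y|^{-1})$ on a smooth surface. Hence $\frac12+\mathcal{D}$ is Fredholm of index zero. Now suppose $(\frac12+\mathcal{D})\mu=0$. Then $u=D[\mu]$ solves the exterior Dirichlet problem with zero data, so $u\equiv0$ outside $\Omega$ by Rellich's lemma and uniqueness. The jump relations give $\partial_n u^-=\partial_n u^+=0$ and $u^- - u^+=-\mu$. So the interior field solves the homogeneous Neumann problem, and since $k$ is not an interior Neumann eigenvalue, $u^-\equiv0$, which forces $\mu=0$. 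The Fredholm alternative then yields a unique $\mu\in C(\partial\Omega)$.

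\emph{Bootstrap to analyticity.} The field $u=D[\mu]$ in $\mathbb{R}^3\setminus\overline\Omega$ solves the exterior Dirichlet problem with data $f$, which is analytic. Elliptic regularity up to the boundary for analytic boundary and analytic Dirichlet data (Morrey--Nirenberg) shows that $u$ extends analytically across $\partial\Omega$ from outside, so $\partial_n u^+$ is analytic on $\partial\Omega$. The interior field $u^-=D[\mu]|_\Omega$ solves the Neumann problem with data $\partial_n u^-=\partial_n u^+$. This data is analytic, and the interior Neumann problem is uniquely solvable by the eigenvalue assumption. The same Morrey--Nirenberg theorem for the Neumann problem makes $u^-$ analytic up to $\partial\Omega$. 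The jump relation $\mu=u^+-u^-$ on $\partial\Omega$ then shows $\mu$ is analytic.

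A more self-contained alternative is to write $\mu=2f-2\mathcal{D}\mu$ and show that $\mathcal{D}$ maps $C^{m}$ to $C^{m+1}$ with factorial-type bounds. The kernel is weakly singular of order $-1$, and analytic parameterizations let one differentiate tangentially by moving derivatives onto the density through a local change of variables.

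\emph{Main obstacle.} The delicate step is the analytic-up-to-boundary regularity. If I cannot cite it cleanly, I would use the Kress-style approach for analytic boundaries. Parametrize $\partial\Omega$ by \eqref{para_quasi} and complexify $(s,\theta)$ into a strip. Then show that $\mathcal{D}$ maps the Banach space of bounded holomorphic functions on a strip of width $h$ compactly into the corresponding space on a slightly smaller strip, after subtracting the analytic-times-logarithmic/$1/\rho$ singular structure. Combined with the uniqueness above, this gives a solution holomorphic in a strip, i.e.\ analytic. I would rely on the PDE citation as the primary route, with this argument as the fallback.
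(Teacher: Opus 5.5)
The paper does not prove this lemma. It states it and points to the standard solvability theory in \cite{article41,article50}, so there is no argument of the paper's to compare with, and your proposal supplies the standard proof the paper omits.

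\textbf{Existence and uniqueness.} This step is the classical Colton--Kress argument: compactness of $\mathcal{D}$, exterior uniqueness, continuity of the normal derivative of the double layer, and exclusion of interior Neumann eigenvalues. It also shows that the Dirichlet-eigenvalue hypothesis in the lemma is not needed for this formulation. Your reading of ``smooth'' as real-analytic is necessary as well, since otherwise neither ``$f$ analytic on $\partial\Omega$'' nor the conclusion has a well-defined meaning.

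\textbf{The step that needs repair.} Morrey--Nirenberg assumes the solution already has some regularity up to the boundary, for example $H^1$ or $C^\infty$ near $\partial\Omega$. For $D[\mu]$ with $\mu$ merely continuous, you only know continuous one-sided limits of $u$. The one-sided normal derivative $\partial_n u^-$ is not a priori defined; only the difference of normal derivatives tends to zero.

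The fix is routine. First bootstrap $\mu=2f-2\mathcal{D}\mu$, using that $\mathcal{D}$ maps $C(\partial\Omega)$ into $C^{0,\alpha}(\partial\Omega)$ and $C^{m,\alpha}(\partial\Omega)$ into $C^{m+1,\alpha}(\partial\Omega)$, to get $\mu\in C^\infty$. Then $u^\pm$ are smooth up to $\partial\Omega$ from each side, and $\partial_n u^+=\partial_n u^-$ holds classically. From there, Morrey--Nirenberg for the exterior Dirichlet and interior Neumann problems, together with $\mu=u^+-u^-$, gives analyticity exactly as you wrote. Uniqueness of the interior Neumann problem is not needed at that stage, since the regularity theorem applies to any solution.

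\textbf{The fallbacks.} Your factorial bounds and holomorphic extension to a strip are only sketched. The complexification of $|\mathbf{x}-\mathbf{y}|$ in three dimensions is also less clean than Kress's two-dimensional logarithmic setting. Since the PDE route suffices, this is not a gap.
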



We next recall two standard exponential convergence estimates for the interpolation and quadrature rules used in the discretization.
\begin{lemma}
\label{interp_tri}
    Let $f$ be a periodic analytic function, and let $P_n[f]$ denote its trigonometric interpolant at $n$ equally spaced nodes. Then
    \begin{eqnarray*}
        \Vert P_n[f]-f\Vert_{\infty}\leq Ce^{-n\sigma},
    \end{eqnarray*}
    where the constants $C>0$ and $\sigma>0$ depend on $f$. Thus, trigonometric interpolation of periodic analytic functions converges exponentially.
\end{lemma}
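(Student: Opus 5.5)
The plan is the classical argument for trigonometric interpolation of periodic analytic functions: exponential decay of Fourier coefficients combined with the aliasing formula for the interpolant.

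\textbf{Step 1: decay of Fourier coefficients.} Since $f$ is $2\pi$-periodic and analytic on $\mathbb{R}$, it extends holomorphically to a strip $S_a=\{\theta: |\mathrm{Im}\,\theta|<a\}$ for some $a>0$. Fix $0<\sigma<a$; then $f$ is bounded on the closed strip $|\mathrm{Im}\,\theta|\le\sigma$, say by $M$. Write
\begin{equation*}
\hat f_m=\frac{1}{2\pi}\int_0^{2\pi}f(\theta)e^{-im\theta}\,d\theta .
\end{equation*}
By periodicity and Cauchy's theorem, the contour can be shifted to $\mathrm{Im}\,\theta=-\sigma\,\mathrm{sgn}(m)$; the vertical sides of the rectangle cancel. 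This gives $|\hat f_m|\le Me^{-\sigma|m|}$.

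\textbf{Step 2: aliasing.} Let $n=2N$ nodes $\theta_j=j\pi/N$. The interpolant $P_n[f]$ has coefficients $c_m$ for $-N\le m\le N$, with the $\pm N$ modes combined into $\cos N\theta$ as in the definition of $L_j$. Evaluating the discrete Fourier transform of the samples, and using that $e^{i(m+2Nl)\theta_j}=e^{im\theta_j}$, we get
\begin{equation*}
c_m=\sum_{l\in\mathbb{Z}}\hat f_{m+2Nl}
\end{equation*}
for $|m|<N$, with the analogous symmetrized formula for the $N$-mode. Hence
\begin{equation*}
f-P_n[f]=\sum_{|m|\ge N}\hat f_m e^{im\theta}-\sum_{|m|\le N}\Big(\sum_{l\ne0}\hat f_{m+2Nl}\Big)e^{im\theta},
\end{equation*}
up to the boundary mode.

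\textbf{Step 3: sup-norm bound.} Bound both sums using Step 1. The tail satisfies
\begin{equation*}
\sum_{|m|\ge N}|\hat f_m|\le \frac{2M e^{-\sigma N}}{1-e^{-\sigma}}.
\end{equation*}
Every aliased index obeys $|m+2Nl|\ge N$, and each index $|k|\ge N$ occurs at most a bounded number of times in the double sum. So the aliasing term is also $O(Me^{-\sigma N})$. Together this gives $\|P_n[f]-f\|_\infty\le Ce^{-\sigma N}$, that is, $Ce^{-n\sigma'}$ with $\sigma'=\sigma/2$ in terms of the number of nodes. The constants depend on $f$ only through the strip width and the bound $M$, which is what the statement asserts after renaming $\sigma$.

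\textbf{Main obstacle.} The only delicate point is the Nyquist mode $m=\pm N$ in the even-$n$ case. The interpolant uses $\cos N(\theta-\theta_j)$ rather than a single exponential, so the aliasing identity must be checked for the symmetrized coefficient. I would handle this by writing that mode's coefficient as $\tfrac12(c_N+c_{-N})$ and bounding it by the same geometric series. Odd $n$ avoids the issue entirely.
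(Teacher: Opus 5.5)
Your argument is correct, and this is the classical Fourier-coefficient/aliasing proof. The Nyquist bookkeeping is also fine. Since $e^{\pm iN\theta_j}=(-1)^j$, one has $c_N=c_{-N}$. Each of the $\pm N$ exponentials in $P_n[f]$ therefore carries coefficient $\tfrac12 c_N$, and the resulting error $\hat f_{\pm N}-\tfrac12 c_N$ is already covered by your geometric series.

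The paper gives no proof of its own here; it defers to the cited reference. The standard textbook proof (as in Kress's \emph{Linear Integral Equations}) takes a different route via the residue theorem. For $\theta$ not a node, apply it on the period rectangle $[0,2\pi]\times[-\sigma,\sigma]$ to $\tau\mapsto\frac{\sin N\theta}{\sin N\tau}\,\tfrac12\cot\frac{\tau-\theta}{2}\,f(\tau)$.
\begin{itemize}
\item The residue at $\tau=\theta$ is $f(\theta)$.
\item By the closed form $L_j(\theta)=\frac{1}{2N}\sin N(\theta-\theta_j)\cot\frac{\theta-\theta_j}{2}$, the residue at $\tau=\theta_j$ is $-f(\theta_j)L_j(\theta)$.
\item Hence the contour integral equals $f(\theta)-P_n[f](\theta)$ exactly.
\end{itemize}
The vertical sides cancel by periodicity. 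On the horizontal sides, $|\sin N\tau|\ge\sinh(N\sigma)$ and $|\cot\frac{\tau-\theta}{2}|\le\coth\frac{\sigma}{2}$, which give $\|P_n[f]-f\|_\infty\le M\coth(\sigma/2)/\sinh(N\sigma)$.

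That route never expands $f$ in a Fourier series. It treats the Nyquist mode automatically because it works with the Lagrange basis directly, and it yields an explicit constant. Your route separates the two error mechanisms, truncation and aliasing. It also carries over essentially unchanged to finite regularity, where algebraic decay of $\hat f_m$ gives algebraic rates in Sobolev norms. Both give the same rate $e^{-\sigma N}=e^{-\sigma n/2}$ for any $\sigma$ below the half-width of the strip of analyticity.
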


\begin{lemma}
\label{interp_leg}
    Suppose that $f$ is analytic on $[a,b]$. Let $I[f]$ denote the exact integral of $f$ over $[a,b]$, and let $Q_n[f]$ denote the corresponding $n$-point Gauss-Legendre quadrature approximation. Then
     \begin{eqnarray*}
        |Q_n[f]-I[f]|\leq Ce^{-n\sigma},
    \end{eqnarray*}
    where the constants $C>0$ and $\sigma>0$ depend on $f$.
\end{lemma}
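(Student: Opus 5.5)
The statement to prove is Lemma \ref{interp_leg}: Gauss--Legendre quadrature converges exponentially for analytic integrands. I would take the classical route through best polynomial approximation, using two facts. First, the $n$-point Gauss--Legendre rule is exact for all polynomials of degree at most $2n-1$. Second, its weights are positive and sum to $b-a$.

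\textbf{Step 1 (reduction to polynomial approximation).} By an affine change of variables I reduce to $[a,b]=[-1,1]$; this only rescales $C$. Let $p^*$ be the best uniform approximation of $f$ on $[-1,1]$ from polynomials of degree at most $2n-1$. Exactness gives $Q_n[p^*]=I[p^*]$, so
\[
|Q_n[f]-I[f]| = |Q_n[f-p^*]-I[f-p^*]| \le \Big(\sum_{j=1}^n |w_j| + 2\Big)\, \|f-p^*\|_\infty = 4\,E_{2n-1}(f).
\]
Here I used $w_j>0$ and $\sum_j w_j = 2$, and $E_{2n-1}(f)$ denotes the best approximation error. The quadrature question is now purely an approximation question.

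\textbf{Step 2 (Bernstein ellipse estimate).} This is the main analytic input. Since $f$ is analytic on the compact interval $[-1,1]$, it extends analytically to some open neighborhood. That neighborhood contains a closed Bernstein ellipse $E_\rho$ with foci $\pm1$ and semi-axis sum $\rho>1$, on which $|f|\le M$.

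I expand $f$ in Chebyshev polynomials, $f=\sum_k a_k T_k$. Substituting $x=(z+z^{-1})/2$ turns $f$ into a Laurent series on the annulus $\rho^{-1}<|z|<\rho$. Cauchy's estimate on circles $|z|=\rho'$ with $\rho'<\rho$ arbitrary then gives $|a_k|\le 2M\rho^{-k}$. Truncating the series at degree $2n-1$ yields
\[
E_{2n-1}(f)\le \sum_{k\ge 2n} 2M\rho^{-k} = \frac{2M\rho^{-2n}}{1-\rho^{-1}}.
\]
Combining with Step 1 gives $|Q_n[f]-I[f]|\le C e^{-n\sigma}$ with $\sigma=2\log\rho$ and $C=8M/(1-\rho^{-1})$. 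Both constants depend only on $f$, as the statement requires.

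The only delicate point is the existence of the ellipse $E_\rho$, which follows from compactness of $[-1,1]$. The positivity of the Gauss weights is standard; it follows by applying the rule to $\ell_j^2$, the square of the $j$th Lagrange basis polynomial at the Gauss nodes, which has degree $2n-2$ and so is integrated exactly.

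Lemma \ref{interp_tri} would be handled the same way. For a periodic $f$ analytic in a strip $|\mathrm{Im}\,\theta|<\sigma_0$, the Fourier coefficients decay like $e^{-\sigma|m|}$ for any $\sigma<\sigma_0$. The interpolation error is bounded by twice the tail sum of these coefficients, via the aliasing formula, and that tail is exponentially small in $n$.
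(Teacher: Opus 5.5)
Your proof is correct. Exactness to degree $2n-1$, together with positive weights summing to $b-a$, reduces the error to a constant times $E_{2n-1}(f)$, and the Bernstein-ellipse bound $|a_k|\le 2M\rho^{-k}$ on the Chebyshev coefficients then yields $\sigma=2\log\rho$. The paper gives no proof of its own and simply defers to its cited references on Gauss--Legendre error analysis; your argument is the standard one used in that literature, so there is nothing to add.
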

Details on trigonometric interpolation can be found in \cite{article13}, while the error analysis for Gauss--Legendre quadrature is given in \cite{article42,article43}. Together, these two lemmas establish exponential convergence for the trigonometric interpolation in the azimuthal variable and the Gauss-Legendre quadrature along the generating curve. 
\begin{theorem}
\label{thm:quad}
    Let $f=f(s,\theta)$ be analytic in $s$ and $\theta$ and periodic in $\theta$. Define the product quadrature error by
    \begin{eqnarray*}
E_{n_s,n_{\theta}}^2[f]=\int_0^T\int_{0}^{2\pi}f(s,\theta)\,d\theta\,ds-
\sum\limits_{l=1}^{n_s}\sum\limits_{j=0}^{n_{\theta}}f(s_l,\theta_j)\omega_l^s\omega_{j}^{\theta},
    \end{eqnarray*}
    where $\omega_l^s$ are the Gauss-Legendre weights in $s$, and $\omega_j^{\theta}$ are the integrals of the trigonometric Lagrange basis functions on $[0,2\pi)$. Then
    \begin{eqnarray*}
        |E_{n_s,n_{\theta}}^2[f]|\leq Ce^{-n\sigma},
    \end{eqnarray*}
    where $n=\max(n_s,n_{\theta})$, and the constants $C>0$ and $\sigma>0$ depend on $f$.
\end{theorem}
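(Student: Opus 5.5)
The plan is a standard splitting of the tensor-product error into a one-dimensional error in $\theta$ and a one-dimensional error in $s$, each controlled by Lemma~\ref{interp_tri} and Lemma~\ref{interp_leg}. Write $I_\theta[f](s)=\int_0^{2\pi}f(s,\theta)\,d\theta$ and $Q_\theta[f](s)=\sum_j f(s,\theta_j)\omega_j^\theta$. Since $\omega_j^\theta=\int_0^{2\pi}L_j(\theta)\,d\theta$, we have $Q_\theta[f](s)=\int_0^{2\pi}P_{n_\theta}[f(s,\cdot)](\theta)\,d\theta$, so the $\theta$-rule is exactly integration of the trigonometric interpolant. Adding and subtracting $\sum_l \omega_l^s I_\theta[f](s_l)$ gives
\begin{equation*}
E^2_{n_s,n_\theta}[f]=\Bigl(\int_0^T I_\theta[f](s)\,ds-\sum_{l=1}^{n_s}\omega_l^s I_\theta[f](s_l)\Bigr)+\sum_{l=1}^{n_s}\omega_l^s\bigl(I_\theta[f](s_l)-Q_\theta[f](s_l)\bigr).
\end{equation*}

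For the second term, apply Lemma~\ref{interp_tri} to $f(s_l,\cdot)$ for each fixed $s_l$. This gives $|I_\theta[f](s_l)-Q_\theta[f](s_l)|\le 2\pi\,\|P_{n_\theta}[f(s_l,\cdot)]-f(s_l,\cdot)\|_\infty\le 2\pi C e^{-n_\theta\sigma_\theta}$. Gauss--Legendre weights are positive and sum to $T$, so the whole second term is bounded by $2\pi T C e^{-n_\theta\sigma_\theta}$.

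For the first term, note that $I_\theta[f]$ is analytic in $s$ on $[0,T]$, since integrating an analytic function over a compact parameter interval preserves analyticity. Lemma~\ref{interp_leg} then bounds this term by $C'e^{-n_s\sigma_s}$.

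\textbf{Main obstacle: uniformity of the constants.} Lemma~\ref{interp_tri} is stated with constants depending on the function, and here the function $f(s_l,\cdot)$ varies with $s_l$. To fix this, I would use the standard form of the estimate: if $g$ extends analytically and boundedly to the strip $|\operatorname{Im}\theta|<a$ with bound $M$, then the interpolation error is at most $C M e^{-n\sigma}$ for any $\sigma<a$. Because $f$ is analytic on the compact set $[0,T]\times[0,2\pi]$ and periodic in $\theta$, it extends analytically to a complex neighbourhood of the form $\{|\operatorname{Im}\theta|<a\}\times U$, with $U\supset[0,T]$ and $a$ uniform in $s$, by compactness. After shrinking the neighbourhood, $f$ is bounded there by some $M$. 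This gives one pair $(C,\sigma_\theta)$ valid for every $s\in[0,T]$. The same neighbourhood, through a Bernstein ellipse around $[0,T]$, controls the constant in Lemma~\ref{interp_leg}.

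Combining the two bounds gives $|E^2_{n_s,n_\theta}[f]|\le C_1e^{-n_s\sigma_s}+C_2e^{-n_\theta\sigma_\theta}$. This is exponentially small as both $n_s$ and $n_\theta$ grow. The bound in the statement, written in terms of a single index $n$, should therefore be read with the two discretization parameters refined together, with $\sigma=\min(\sigma_s,\sigma_\theta)$ and $n$ the size of the smaller of $n_s$ and $n_\theta$ (which is what comparability of the two gives). A bound in terms of $\max(n_s,n_\theta)$ alone cannot hold if one of the two indices stays fixed, so I would state the proportional-refinement assumption $n_s\asymp n_\theta$ explicitly in the proof.
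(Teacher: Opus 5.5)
The paper gives no proof of this theorem. It is stated right after Lemmas~\ref{interp_tri} and~\ref{interp_leg}, with only the remark that the two lemmas together give exponential convergence. Your argument is exactly the one the paper leaves implicit, and it is correct. It splits the error into a Gauss--Legendre error for the single analytic function $I_\theta[f]$, plus a positively weighted average of the azimuthal errors $I_\theta[f](s_l)-Q_\theta[f](s_l)$. It then makes the azimuthal constants uniform in $s_l$ through a common neighbourhood $U\times\{|\operatorname{Im}\theta|<a\}$ on which $f$ is bounded.

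You are also right that the statement cannot hold with $n=\max(n_s,n_\theta)$: fix $n_s$ and let $n_\theta\to\infty$. The intended index is $n=\min(n_s,n_\theta)$, which is how Theorem~\ref{thm:single_conv} uses it.

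For the $\min$ form you do not need any proportional-refinement hypothesis, since $C_1e^{-n_s\sigma_s}+C_2e^{-n_\theta\sigma_\theta}\le (C_1+C_2)\,e^{-\min(\sigma_s,\sigma_\theta)\min(n_s,n_\theta)}$ holds unconditionally. The assumption $n_s\asymp n_\theta$ is needed only if you insist on a bound in terms of $\max(n_s,n_\theta)$, and then $\sigma$ shrinks by the proportionality constant.

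A minor point: with $n_\theta$ counting interpolation nodes, a strip of half-width $a$ yields a rate of roughly $a/2$ per node, not any $\sigma<a$. This does not matter, since only some $\sigma>0$ is claimed.
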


The following theorem establishes convergence for a single scatterer under the assumption that the generalized Gaussian quadrature used for singular and nearly singular panel interactions is exponentially accurate for analytic densities. This is a reasonable assumption based on the design of generalized Gaussian quadrature \cite{article48}. In particular, after the local singular part of the Helmholtz kernel is separated, the remaining panel dependent factors are analytic for smooth quasi-axisymmetric geometries, while the singular factor belongs to the finite collection of kernel functions for which the generalized Gaussian rules are constructed. Hence the corrected panel quadrature is expected to inherit exponential accuracy for analytic densities, consistent with the approximation properties of generalized Gaussian quadratures in related Nystr\"om discretizations \cite{article49}. 

\begin{theorem}[Single-body]
\label{thm:single_conv}
 Let $\mathcal{A}$ be the continuous double-layer boundary integral operator, and let $\mathcal{A}_h$ be its Nystr\"om discretization constructed from the quadrature scheme in Section~3. Here $h=(n_s,n_\theta)$, where $n_s$ is the number of quadrature points along the generating curve and $n_\theta$ is the number of azimuthal discretization points.  Let $\mu$ and $\mu_h$ solve
\begin{equation}
    \mathcal{A}\mu = f, \quad \mathcal{A}_h\mu_h = f_h,
\end{equation}
respectively. Then, for sufficiently large $n_s$ and $n_\theta$, it holds
\begin{equation}
    \|\mu - \mu_h\|_\infty \leq C e^{-n\sigma},
\end{equation}
where $n=\min(n_s,n_\theta)$, and the constants $C>0$ and $\sigma>0$ are independent of $n$.
\end{theorem}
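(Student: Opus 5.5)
The argument follows the classical collectively compact framework for Nyström methods (Anselone; Kress, Theorem~12.12 in \cite{article13}). We write $\mathcal{A}=\frac12 I+\mathcal{D}$ and $\mathcal{A}_h=\frac12 I+\mathcal{D}_h$, where $\mathcal{D}_h$ is the Nyström operator of Section~3. It acts on continuous functions by replacing the exact integral with the composite rule: tensor Gauss--Legendre/trapezoidal weights on well-separated panels, and generalized Gaussian quadrature in $s$ combined with the modal-Green's-function representation of the interpolated density in $\theta$ on the self and adjacent panels. The plan has three steps.

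\begin{enumerate}
\item \textbf{Consistency.} Show that $\mathcal{D}_h\varphi\to\mathcal{D}\varphi$ uniformly for every $\varphi\in C(\partial\Omega)$, and that the family $\{\mathcal{D}_h\}$ is collectively compact.
\item \textbf{Stability.} Deduce that $(\frac12+\mathcal{D}_h)^{-1}$ exists and is uniformly bounded for large $n_s,n_\theta$.
\item \textbf{Error bound.} Bound $\|\mu-\mu_h\|_\infty$ by the consistency error on the exact, analytic solution $\mu$.
\end{enumerate}

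For step~1 we use Theorem~\ref{thm:quad} on the well-separated panels. There, for each fixed target $\mathbf{x}$ on panel $\gamma_p$, the integrand $\partial_{n(\mathbf y)}G^k(\mathbf{x},\mathbf{y})\,\varphi(\mathbf y)|\mathrm J(\mathbf y)|$ is analytic in $(s,\theta)$ and periodic in $\theta$ when $\varphi$ is analytic. The strip of analyticity is bounded below uniformly in $\mathbf{x}$ because the source panels are separated from the target by at least one panel. The constants in Theorem~\ref{thm:quad} are therefore uniform over $\mathbf{x}$.

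On the three near panels we split the error into two parts.
\begin{itemize}
\item The interpolation error in $\theta$ of $\varphi|\mathrm J|$, controlled by Lemma~\ref{interp_tri}. Since the modal integrals $\int_0^{2\pi}|\partial_nG^k|\,d\theta$ are integrable in $s$ (weak singularity of order $|\mathbf x-\mathbf y|^{-1}$ for smooth surfaces), this contributes $Ce^{-n_\theta\sigma}$.
\item The quadrature error in $s$ of the generalized Gaussian rule applied to $P_l(s)G_j(t,s)$. This is exponentially small by the standing assumption stated before the theorem, combined with the Gauss--Legendre interpolation error (the analogue of Lemma~\ref{interp_leg}).
\end{itemize}
The modal Green's functions are assumed evaluated exactly, or to accuracy below the discretization error.

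Uniform boundedness $\sup_h\|\mathcal{D}_h\|_\infty<\infty$ follows because all weights are positive on far panels, and the near-panel weights are bounded by integrals of the weakly singular kernel. Equicontinuity of $\mathcal{D}_h\varphi$ in the target variable over bounded sets of $\varphi$ follows from the continuity of the kernel in $\mathbf{x}$ away from the diagonal. The near-field rules, being exact integrals of the kernel times interpolants, inherit continuity in $\mathbf{x}$. Together these give collective compactness.

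For step~2 we invoke the solvability assumption: $k$ is not an interior eigenvalue, so $\frac12+\mathcal{D}$ is injective. Riesz theory then makes it invertible on $C(\partial\Omega)$. Anselone's theorem gives, for $n$ large,
\[
\bigl\|(\tfrac12+\mathcal{D}_h)^{-1}\bigr\|_\infty\le M
\]
with $M$ independent of $h$.

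For step~3 we have the identity
\[
\mu-\mu_h=(\tfrac12+\mathcal{D}_h)^{-1}\bigl[(\mathcal{D}_h-\mathcal{D})\mu+(f_h-f)\bigr].
\]
Here $f_h$ is the sampled data, so $f_h-f$ vanishes at the nodes, and the Nyström interpolation identifies $\mu_h$ with a continuous function. By the regularity lemma, $\mu$ is analytic. Step~1 applied to $\varphi=\mu$ gives
\[
\|(\mathcal{D}_h-\mathcal{D})\mu\|_\infty\le C\bigl(e^{-n_s\sigma}+e^{-n_\theta\sigma}\bigr)\le 2Ce^{-n\sigma},
\qquad n=\min(n_s,n_\theta).
\]
This yields the claimed bound.

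The main obstacle is the uniformity, over the target $\mathbf{x}$, of the near-panel consistency estimate. The analyticity strip of the integrand in $s$ degenerates as the target approaches the panel. Exponential accuracy there therefore rests on the generalized Gaussian rule being exact for the singular kernel class times polynomials, so that only the analytic-density interpolation error remains. I would make this explicit by writing the kernel as a finite sum of the singular basis functions used to build the rules, multiplied by analytic coefficients.
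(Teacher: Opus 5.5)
Your proposal is correct at the paper's level of rigor and follows the same route as the paper. Collective compactness, which the paper simply cites from Kress, gives uniformly bounded $(\frac12+\mathcal{D}_h)^{-1}$, and the error identity then reduces everything to the consistency error on the analytic density $\mu$, bounded via Lemma~\ref{interp_tri}, Theorem~\ref{thm:quad} and the assumed exponential accuracy of the generalized Gaussian rules on the near panels.

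Your extra verification of collective compactness goes beyond the paper but is looser than stated. The near-panel weights are uniformly bounded only after a sharper product-integration argument, because the trigonometric Lagrange basis has a Lebesgue constant growing like $\log N$. The panel-dependent near/far switch makes $\mathcal{D}_h\varphi$ jump at panel endpoints, so equicontinuity has to be argued panelwise. Finally, the sign of $f_h-f$ in your error identity is reversed, which is harmless for the norm bound.
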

\begin{proof}
We regard $\mathcal{A}_h$ as an operator on the analytic function space as $\mathcal{A}$ after interpolation from the discrete grid. Since the continuous boundary integral equation is uniquely solvable, $\mathcal{A}^{-1}$ is bounded. Moreover, the Nystr\"om operators $\mathcal{A}_h$ converge to $\mathcal{A}$ in the collectively compact sense \cite{article13}. Hence, for all sufficiently large $n_s$ and $n_\theta$, the operators $\mathcal{A}_h$ are invertible and their inverses are uniformly bounded, i.e.,
\begin{equation*}
    \|\mathcal{A}_h^{-1}\|_\infty \leq C_0,
\end{equation*}
with $C_0$ independent of $h$. 

Subtracting the continuous and discrete equations gives
\begin{equation*}
\mathcal{A}_h(\mu_h-\mu)=(f_h-f)+(\mathcal{A}-\mathcal{A}_h)\mu .
\end{equation*}
Therefore,
\begin{equation*}
\|\mu_h-\mu\|_\infty
\leq C_0\left(\|f_h-f\|_\infty+\|(\mathcal{A}-\mathcal{A}_h)\mu\|_\infty\right).
\end{equation*}
By Lemma~\ref{interp_tri} and Theorem~\ref{thm:quad}, the interpolation and smooth quadrature errors are exponentially small for analytic data. For the diagonal and near-diagonal panel interactions, the assumed exponential accuracy of the generalized Gaussian quadrature gives the same bound for the weakly singular contributions. Consequently,
\begin{equation*}
\|f_h-f\|_\infty+\|(\mathcal{A}-\mathcal{A}_h)\mu\|_\infty \leq C_1 e^{-n\sigma},
\end{equation*}
where $n=\min(n_s,n_\theta)$. Combining this estimate with the uniform stability bound for $\mathcal{A}_h^{-1}$ proves the theorem.
\end{proof}

Under the same analyticity and separation assumptions used above, the resulting multi-body discretization inherits the same exponential convergence behavior.

\begin{theorem}[Multi-body]
\label{thm:multi_conv}
Consider $N_{\mathrm{geo}}$ smooth quasi-axisymmetric scatterers, and assume that the separation between any two distinct scatterers is sufficiently large. Let $\boldsymbol{\mu}$ and $\boldsymbol{\mu}_h$ denote the solutions of the continuous and discrete multi-body double-layer boundary integral equations, respectively. Then
\begin{equation}
    \|\boldsymbol{\mu} - \boldsymbol{\mu}_h\|_\infty \leq C_M e^{-n\sigma},
\end{equation}
where $C_M > 0$ depends on the number and configuration of scatterers but the convergence rate $\sigma$ is the same as in the single-body case.
\end{theorem}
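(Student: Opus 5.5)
I would mirror the proof of Theorem~\ref{thm:single_conv} at the level of the block operator. Write the continuous multi-body equation \eqref{multi-double} as $\boldsymbol{\mathcal{A}}\boldsymbol{\mu}=\mathbf f$, with $\boldsymbol{\mathcal{A}}=\tfrac12\mathbf I+\boldsymbol{\mathcal{D}}$ and $\boldsymbol{\mathcal{D}}=(\mathcal{D}_{t',t})_{t',t=1}^{N_{\mathrm{geo}}}$ acting on $\prod_t C(\partial\Omega_t)$ with the max norm $\|\boldsymbol{\mu}\|_\infty=\max_t\|\mu_t\|_\infty$. The discrete operator $\boldsymbol{\mathcal{A}}_h$ has the same block structure \eqref{linearsys}. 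Its diagonal blocks $\mathcal{D}_{t,t,h}$ are the single-body Nystr\"om operators of Section~3, transported by rigid motions, which preserve all quadrature errors. Its off-diagonal blocks $\mathcal{D}_{t',t,h}$ apply the smooth product rule to the kernel $\partial_{n(\mathbf y)}G^k(\mathbf x,\mathbf y)$ with $\mathbf x\in\partial\Omega_{t'}$ and $\mathbf y\in\partial\Omega_t$.

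\textbf{Step 1: consistency.} For the diagonal blocks, Theorem~\ref{thm:single_conv}'s consistency estimate gives $\|(\mathcal{D}_{t,t}-\mathcal{D}_{t,t,h})\mu_t\|_\infty\le C e^{-n\sigma}$. This needs $\mu_t$ to be analytic on $\partial\Omega_t$. I would get that from the regularity lemma applied body by body: the right-hand side is $f_t-\sum_{t\neq t'}\mathcal{D}_{t',t}\mu_t$, and each $\mathcal{D}_{t',t}\mu_t$ is analytic on $\partial\Omega_{t'}$ because the kernel is analytic once the bodies are separated. For the off-diagonal blocks, the separation $|\mathbf x-\mathbf y|\ge\alpha>0$ makes $(s,\theta)\mapsto\partial_{n(\mathbf y)}G^k(\mathbf x,\mathbf y)\mu_t(\mathbf y)|\mathrm J(\mathbf y)|$ analytic and $2\pi$-periodic in $\theta$. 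Its analyticity strip depends on $\alpha$ but can be bounded below uniformly in $\mathbf x$, by compactness. Theorem~\ref{thm:quad} then gives a uniform error bound $C e^{-n\sigma'}$. Under the standing assumption that the separation is ``sufficiently large'', the strip width is at least the one governing the self-interaction rate, so $\sigma'\ge\sigma$. Summing over the at most $N_{\mathrm{geo}}$ blocks in each row gives $\|\mathbf f_h-\mathbf f\|_\infty+\|(\boldsymbol{\mathcal{A}}-\boldsymbol{\mathcal{A}}_h)\boldsymbol{\mu}\|_\infty\le N_{\mathrm{geo}}C_1e^{-n\sigma}$.

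\textbf{Step 2: stability.} This is the main obstacle. First, $\boldsymbol{\mathcal{A}}$ is injective. If $\boldsymbol{\mathcal{A}}\boldsymbol{\mu}=0$, the double-layer potential $u$ vanishes outside $\bigcup_t\Omega_t$ by exterior uniqueness. The jump relations then give zero Neumann data inside each $\Omega_t$. If $k$ is not an interior Neumann eigenvalue of any $\Omega_t$, this forces $\boldsymbol{\mu}=0$. Riesz theory then makes $\boldsymbol{\mathcal{A}}^{-1}$ bounded. Next, I need $\{\boldsymbol{\mathcal{D}}_h\}$ to be collectively compact and pointwise convergent to $\boldsymbol{\mathcal{D}}$. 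The diagonal blocks have this property by the single-body proof. The off-diagonal blocks are quadrature operators with continuous kernels, so the standard Anselone theory \cite{article13} applies. A finite block matrix of collectively compact families is again collectively compact. Anselone's theorem then yields $\|\boldsymbol{\mathcal{A}}_h^{-1}\|_\infty\le C_0'$ for $h$ large, with $C_0'$ depending on the configuration. The delicate point is that the diagonal blocks involve generalized Gaussian quadrature, so collective compactness there is inherited from the assumption already used in Theorem~\ref{thm:single_conv}.

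\textbf{Step 3: conclusion.} As in the single-body proof, $\boldsymbol{\mathcal{A}}_h(\boldsymbol{\mu}_h-\boldsymbol{\mu})=(\mathbf f_h-\mathbf f)+(\boldsymbol{\mathcal{A}}-\boldsymbol{\mathcal{A}}_h)\boldsymbol{\mu}$. Combining this identity with Steps~1 and~2 gives the bound with $C_M=C_0'N_{\mathrm{geo}}C_1$. This constant depends on the number and configuration of the scatterers, while the rate $\sigma$ is unchanged from the single-body case.
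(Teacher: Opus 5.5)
Your proposal is correct, but your stability step takes a different route from the paper.

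\textbf{The paper's route.} It splits $\mathcal{A}^M=\mathcal{A}_{\mathrm{diag}}+\mathcal{A}_{\mathrm{off}}$ and spends the large-separation hypothesis on making $q:=\|\mathcal{A}_{\mathrm{diag}}^{-1}\mathcal{A}_{\mathrm{off}}\|<1$. Invertibility and a uniform inverse bound then come from a Neumann series around the single-body blocks. Its consistency step is the same as your Step~1.

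\textbf{Your route.} You prove injectivity of the full block operator by potential-theoretic uniqueness. You then use Riesz theory and Anselone's collective compactness for the whole block Nystr\"om family. The separation hypothesis is spent only on the rate comparison $\sigma'\ge\sigma$.

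\textbf{What each buys.} The paper's argument is shorter and gives an explicit bound of the form $C_0/(1-q)$, inherited from the single-body constant. It also avoids re-verifying collective compactness for the coupled system. Applied to the discrete operators, it uses $\|\mathcal{A}_{\mathrm{diag},h}^{-1}\|\le C_0$ together with the fact that the off-diagonal blocks, built from positive Gauss--Legendre and trapezoidal weights, are uniformly small. Your argument needs only disjointness plus interior Neumann non-resonance of each body. The paper needs that non-resonance implicitly too, for $\mathcal{A}_{\mathrm{diag}}$ to be invertible. Your version therefore also covers closely packed configurations such as the cubic arrays of Example~4. The price is a non-explicit, configuration-dependent constant $C_0'$ and threshold on $h$.

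\textbf{Points you add.} You fill in two things the paper leaves tacit. First, you obtain analyticity of each $\mu_t$ by bootstrapping the regularity lemma; the right-hand side there should read $f_{t'}-\sum_{t\neq t'}\mathcal{D}_{t',t}\mu_t$, not $f_t$. Second, you explain why the off-diagonal rate is no worse than $\sigma$. That comparison is heuristic, since $\sigma$ itself rests only on the assumed generalized Gaussian accuracy, but the paper merely asserts the same rate.
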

\begin{proof}
The multi-body operator can be written as $\mathcal{A}^M=\mathcal{A}_{\mathrm{diag}}+\mathcal{A}_{\mathrm{off}}$, where the diagonal part collects self-interactions and the off-diagonal part collects inter-body interactions. Under the well-separation condition, we can assume the separation distance is large enough such that $\|\mathcal{A}_{\mathrm{diag}}^{-1}\mathcal{A}_{\mathrm{off}}\|<1$. In this case, the operator $\mathcal{A}^M$ is invertible, and $\|(\mathcal{A}^M)^{-1}\|$ is uniformly bounded by the Neumann series. The diagonal blocks satisfy the same spectral convergence as in Theorem~\ref{thm:single_conv}, while the off-diagonal kernels are analytic on $\partial\Omega_i\times\partial\Omega_j$ for $i\neq j$ by the separation assumption. Hence their quadrature errors also decay like $Ce^{-n\sigma}$ by Theorem~\ref{thm:quad}. Combining these estimates yields the stated bound.
\end{proof}

\section{Numerical results}

\begin{table*}[!t] 
	\caption{Notations adopted in the subsequent tables}
	\label{tab: notations}
	\centering
	\begin{tabular}[b]
		{p{2.5cm}<{\raggedright}p{10cm}<{\raggedright}}
		\specialrule{0.1em}{4pt}{3pt}
		Notations  &Description\\
		\specialrule{0.05em}{3pt}{3pt}
		$k$  	 	 & Wavenumber\\
		$N_{p}$ & Number of panels to discretize the generating curve $\gamma$\\
		$N_{f}$  & Number of Fourier modes\\
        $N_{ref}$    & Number of dyadic refinements performed along the panel adjacent to the end point \\
		$N_{pts}$    & Total number of points to discretize the surface $\Gamma$\\
        $T_{ker}$    & Time (seconds) to evaluate all the modal functions \\
        $T_{mat}$    & Time (seconds) to construct the relevant matrix entries  \\
        $T_{solve}$  & Time (seconds) to solve the linear system  \\
		$Error$      & The $\ell^2$ error of the numerical solutions\\
		\specialrule{0.1em}{2pt}{0pt}
	\end{tabular}
\end{table*}

This section presents a series of numerical experiments designed to assess the accuracy and efficiency of the proposed scheme for a range of quasi-axisymmetric scattering objects. We consider both exterior and interior scattering problems, as well as single- and multiple-scatterer configurations, over several wavenumbers. We test results for both the single-layer and double-layer formulations.

Because analytic reference solutions are generally unavailable for these geometries, we estimate the numerical error by means of artificial solutions. More precisely, the reference field is generated by $L$ point sources $\mathrm{\Phi}(\mathbf{x},\mathbf{y}_l)$ with source points $\mathbf{y}_l$ placed inside or outside $\Omega$:
\begin{equation} \label{artificial point source solution}
    u(\mathbf{x})=\sum_{l=1}^{L}\frac{e^{ik|\mathbf{x}-\mathbf{y}_l|}}{4\pi|\mathbf{x}-\mathbf{y}_l|}.
\end{equation}
After computation, the $\ell^2$ error is evaluated over a large set of randomly selected test points by comparing the numerical solution against the reference solution.



We also consider  the physical scattering problem for which no reference solution is available. Specifically, we solve the integral equation with an incident plane wave
\begin{equation}
    u^{inc} = A e^{ik \mathbf{d} \cdot \mathbf{x}},
\end{equation}
where $A=1$ is the amplitude and the unit vector $\mathbf{d}$ specifies the propagation direction,
\begin{equation} \label{plane wave parameters}
\mathbf{d}=(\cos\theta\sin\varphi, \sin\theta\sin\varphi, \cos\varphi),
\end{equation}
with $\theta=\pi/3, \varphi=2\pi/3$.
The asymptotic behavior of the scattered field at infinity is given by
\begin{equation}
\begin{aligned}
    u(x) 
    &= \frac{e^{ik|\mathbf{x}|}}{|\mathbf{x}|}u_{\infty}(\theta, \varphi)+ \mathcal{O}\left(\frac{1}{|\mathbf{x}|^2} \right),\quad |\mathbf{x}|\to+\infty.
\end{aligned}
\end{equation}
Here $u_\infty$ is referred to as the far-field pattern
\begin{equation}
    u_{\infty}(\theta, \varphi) = \frac{1}{4\pi} \int_{\partial\Omega} e^{-ik\mathbf{y} \cdot \frac{\mathbf{x}}{|\mathbf{x}|}} \mu (\mathbf{y}) ds(\mathbf{y}) ,
\end{equation}
where $\mu$ is the single-layer density obtained by solving the boundary integral equation. Self-convergence tests are performed on $u_{\infty}(\theta,\varphi)$ using 200 equispaced sampling points in each of the azimuthal and polar directions on the unit sphere.

For a single target, matrix assembly is dominated by the evaluation of the modal Green's functions and the application of the modified quadrature rule. In all single-body experiments, we apply a 16th-order Nyström-like discretization to each integral equation along the generating curve. The tolerance used for kernel evaluation and for truncating the Fourier series in the discrete convolutions is set to $10^{-12}$.

In the multi-body setting, the modal Green's functions associated with self-interaction need to be evaluated only once because all target bodies are identical. Additional computational cost arises from the mutual interactions between distinct bodies. Under mesh refinement, the matrix size grows rapidly, and the cost of solving the resulting dense linear systems increases accordingly.
In all experiments except the large-scale regular array examples in Section~\ref{sec:cubic}, we use MATLAB's direct solver (\texttt{mldivide}) to solve the dense linear system.
For the large-scale cubic arrays in Section~\ref{sec:cubic}, we adopt a hybrid solver strategy: GMRES preconditioned by a block-diagonal preconditioner is used for low wavenumber, while the direct solver is
used for high wavenumber.
The block-diagonal preconditioner is constructed from the LU factorization of the single-body self-interaction matrix, which is computed once and shared by all scatterers due to the identical geometry.
The switch to direct solve at high wavenumber is motivated by the stronger inter-body coupling at higher wavenumbers, which slows GMRES convergence and makes the direct solver more competitive.
While MATLAB's direct solvers are efficient for this work's scale, the prohibitive $\mathcal{O}(N^3)$ complexity for millions of unknowns would necessitate fast algorithms like the Fast Multipole Method(FMM) \cite{article60} or preconditioned iterative solvers.

All experiments were implemented in MATLAB and carried out on a server equipped with an Intel Xeon CPU and 256 GB of RAM. The notation used in the tables is summarized in Table~\ref{tab: notations}.

\subsection{Single-body scattering}

\subsubsection{Example 1: Scattering from a quasi-wave geometry}
\label{sec:single}

\begin{figure}[!t]
\centering

\begin{subfigure}[t]{0.3\textwidth}
\centering
\includegraphics[width=1.4in]{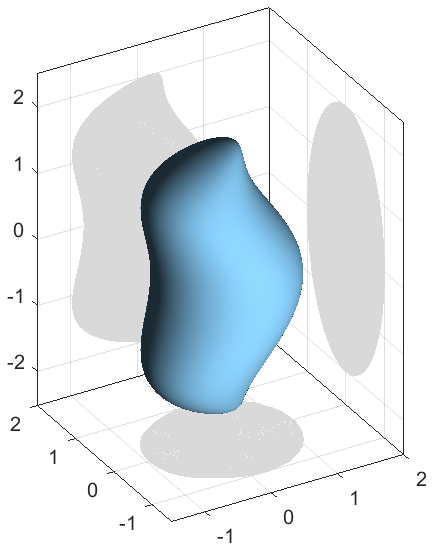}
\caption{quasi-wave geometry.}
\label{geo5}
\end{subfigure}
\hfill
\begin{subfigure}[t]{0.34\textwidth}
\centering
\includegraphics[width=2.1in]{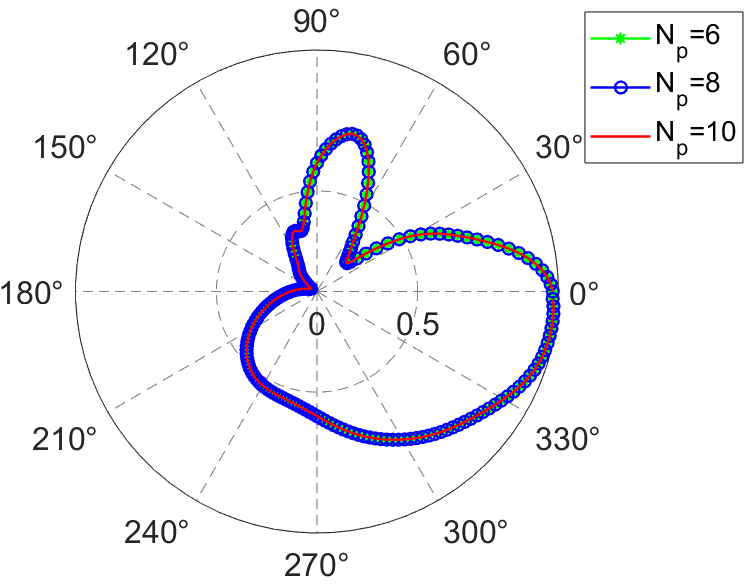}
\caption{$|u_\infty(\cdot,\pi/2)|.$}
\label{subfig: quasiwave far theta}
\end{subfigure}
\hfill
\begin{subfigure}[t]{0.34\textwidth}
\centering
\includegraphics[width=2.1in]{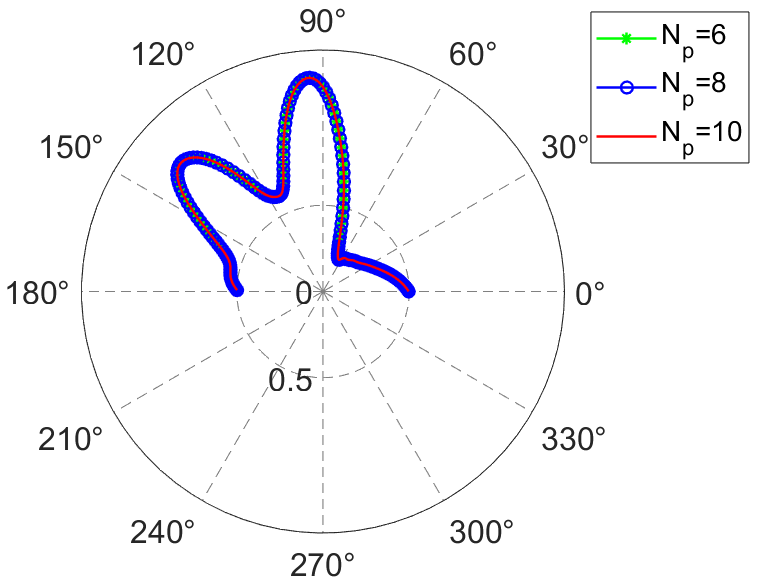}
\caption{$|u_\infty(0,\cdot)|.$}
\label{subfig: quasiwave far phi}
\end{subfigure}

\caption{Numerical results for the quasi-wave geometry with $k=10$ and $N_f=40$.}
\label{fig: quasiwave}
\end{figure}
\begin{table*}[!t]
    \caption{Exterior scattering from the quasi-wave geometry using the single-layer potential}
    \label{tab:geo2singleexterior}
    \centering
    \renewcommand{\arraystretch}{0.8}
    \begin{tabular}
    {p{0.5cm}<{\raggedright}p{1cm}<{\raggedright}p{1cm}<{\raggedright}p{1cm}<{\raggedright}p{1cm}
    <{\raggedright}p{1.5cm}<{\raggedright}p{1.5cm}
    <{\raggedright}p{1.5cm}
    <{\raggedright}p{2cm}
    <{\raggedright}}
    \specialrule{0.1em}{4pt}{4pt}
    $k$&  $N_{ref}$&  $N_{p}$&  $N_{f}$&  $N_{pts}$&  $T_{ker}$&  $T_{mat}$&  $T_{solve}$&  $Error$ \\
    \specialrule{0.05em}{4pt}{4pt}
    2&	0&    6&	40&	3840&	75.64 & 	81.12 & 	1.49& 	3.21E-13\\
    2&	2&   16&	40&	10240&	173.35&	    186.06&	    6.11&	4.40E-16\\
    2&	5&   22&	60& 21120&  632.00&	    674.50&	   45.06&	4.01E-17\\
    \specialrule{0em}{3pt}{3pt}
    5&	0&    6&	40&	3840&	46.58 & 	50.12 & 	1.50& 	1.04E-11\\
    5&	2&    16&	40&	10240&	278.19&	    298.51&	   22.39&	2.29E-14\\
    5&	5&    22&	60& 21120&  531.88&     571.86&	   46.95&	1.29E-16\\
    \specialrule{0em}{3pt}{3pt}
    10&	0&    8&	60&	7680&	103.82& 	117.74& 	3.18& 	4.27E-11\\
    10&	2&   18&	60&	17280&	327.84&	    354.75&    37.05&	5.41E-14\\
    10&	5&   24&	60&	23040&	776.72&	    832.36&    77.47&	3.46E-15\\
    \specialrule{0.1em}{2pt}{0pt}
    \end{tabular}
\end{table*}

\begin{table*}[!t]
    \caption{Exterior scattering from the quasi-wave geometry using the double-layer potential} 
    \label{tab:geo2doubleexterior}
    \centering
    \renewcommand{\arraystretch}{0.8}
    \begin{tabular}
    {p{0.5cm}<{\raggedright}p{1cm}<{\raggedright}p{1cm}<{\raggedright}p{1cm}<{\raggedright}p{1cm}
    <{\raggedright}p{1.5cm}<{\raggedright}p{1.5cm}
    <{\raggedright}p{1.5cm}
    <{\raggedright}p{2cm}
    <{\raggedright}}
    \specialrule{0.1em}{4pt}{4pt}
    $k$&  $N_{ref}$&  $N_{p}$&  $N_{f}$&  $N_{pts}$&  $T_{ker}$&  $T_{mat}$&  $T_{solve}$&  $Error$ \\
    \specialrule{0.05em}{4pt}{4pt}
    2&	0&    6&	40&	3840&	76.98 & 	83.39 & 	1.29& 	1.09E-11\\
    2&	2&    16&	40&	10240&	257.16&	    281.12&	   8.43 &	8.79E-15\\
    2&	5&    22&	60& 21120&  964.67&    1052.73&	   56.13&	1.14E-15\\
    \specialrule{0em}{3pt}{3pt}
    5&	0&    6&	40&	3840&	70.61 & 	77.89 & 	0.98& 	1.60E-11\\
    5&	2&    16&	40&	10240&	277.45&	    301.17&	   9.03 &	1.86E-14\\
    5&	5&    22&	60& 21120&  874.43&     950.68&	   49.35&	3.76E-16\\
    \specialrule{0em}{3pt}{3pt}
    10&	0&    8&	60&	7680&	148.29& 	160.06& 	3.64& 	5.15E-11\\
    10&	2&   18&	60&	17280&	555.05&	    603.71&    27.35&	4.16E-15\\
    10&	5&   24&	60&	23040& 1022.15&	   1103.79&    58.37&	2.71E-15\\
    \specialrule{0.1em}{2pt}{0pt}
    \end{tabular}
\end{table*}

To illustrate the applicability of the method to periodically undulating surfaces, we first consider a quasi-wave geometry parameterized by
\begin{equation}
\label{eq:quasiwave}
\text{Quasi-wave:}\qquad
\begin{cases}
x = 0.2\cos(4t)+\cos(t)\cos(\varphi), \\
y = \cos(t)\sin(\varphi), \\
z = 2\sin(t),
\end{cases}
\end{equation}
where $t\in[-\pi/2,\pi/2]$ and $\varphi\in[0,2\pi]$. See Figure~\ref{geo5} for an illustration of the geometry. 
Tables~\ref{tab:geo2singleexterior} and~\ref{tab:geo2doubleexterior} show the results obtained with the single-layer and double-layer formulations, respectively.

The method achieves high accuracy for all tested wavenumbers, with the error decreasing as the number of panels and Fourier modes increases. As expected, higher wavenumbers require finer discretizations to maintain the same accuracy. Dyadic refinement near the poles further improves the results across all cases. For instance, when $k=10$, using five levels of dyadic refinement reduces the error to about $10^{-15}$, compared with about $10^{-11}$ for the unrefined computation with eight panels. This  improvement shows that the proposed discretization remains effective for the quasi-wave geometry. The matrix assembly time dominates the total cost, particularly for the double-layer formulation, whose kernel evaluation is more involved.

The corresponding far field patterns are shown in Figures~\ref{subfig: quasiwave far theta} and~\ref{subfig: quasiwave far phi}. With the number of Fourier modes fixed at $N_f=40$, the far field pattern converges rapidly as the number of panels increases. In the $xy$ plane, $\phi=\pi/2$, the pattern contains several pronounced lobes and deep nulls, indicating strong interference effects caused by the periodic surface structure. In contrast, in the $xz$ plane $\theta=0$, the scattering is dominated by a main forward lobe with weaker side lobes distributed symmetrically on both sides. This simpler structure occurs because the quasi-wave profile is viewed along its axial direction, producing a less intricate interference pattern than in the azimuthal plane.

\subsubsection{Example 2: Spectral convergence for quasi-axisymmetric geometries}
\begin{figure}[!t]
    \begin{minipage}[b]{0.32\textwidth}
        \centering
        \includegraphics[height=1.6in]{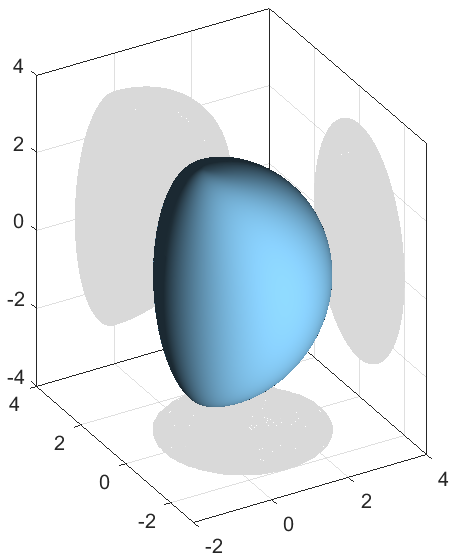}
        \subcaption{Quasi-ellipsoid.}
        \label{geo1}
    \end{minipage}
    \begin{minipage}[b]{0.32\textwidth}
        \centering
        \includegraphics[height=1.6in]{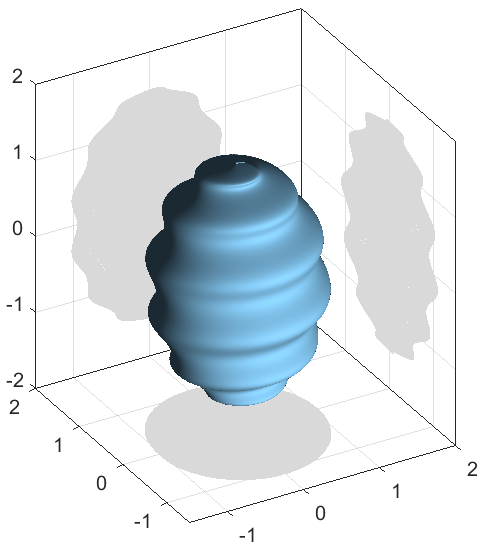}
        \subcaption{Spiral.}
        \label{spiral}
    \end{minipage}
    \begin{minipage}[b]{0.32\textwidth}
        \centering
        \includegraphics[height=1.6in]{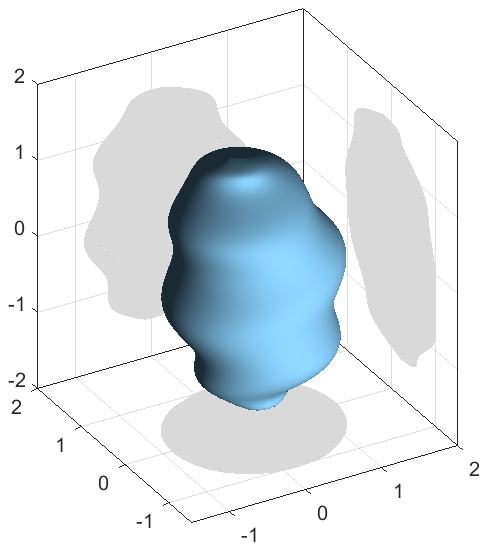}
        \subcaption{Conch.}
        \label{conch}
    \end{minipage}

    \vspace{0.2cm}

    \begin{minipage}[b]{0.32\textwidth}
        \centering
        \includegraphics[height=1.5in]{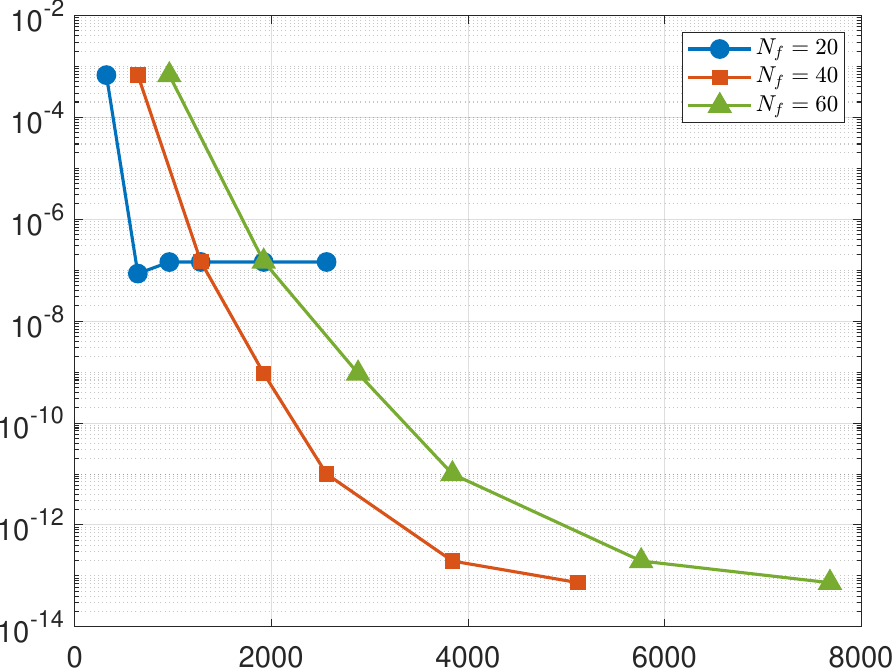}
        \subcaption{Convergence for quasi-ellipsoid.}
        \label{conv_geo1}
    \end{minipage}
    \begin{minipage}[b]{0.32\textwidth}
        \centering
        \includegraphics[height=1.5in]{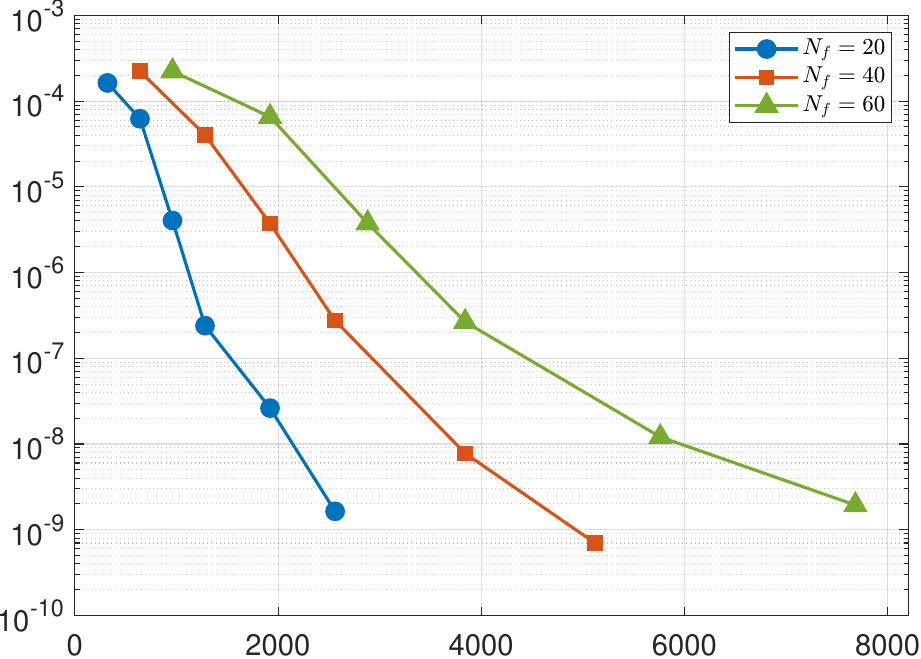}
        \subcaption{Convergence for spiral.}
        \label{conv_spiral}
    \end{minipage}
    \begin{minipage}[b]{0.32\textwidth}
        \centering
        \includegraphics[height=1.5in]{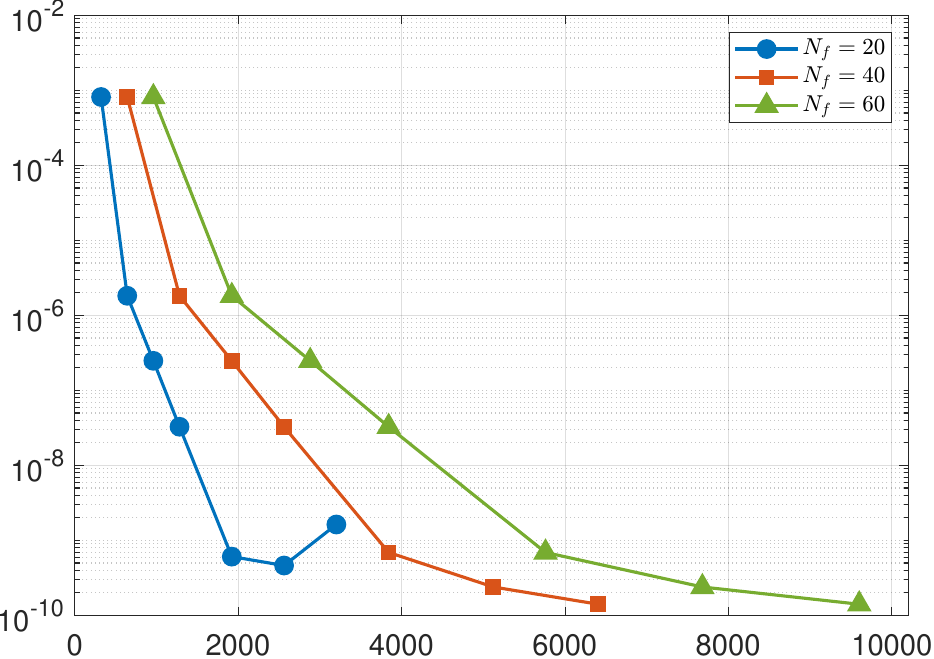}
        \subcaption{Convergence for conch.}
        \label{conv_conch}
    \end{minipage}

    \caption{Three quasi-axisymmetric geometries and the corresponding spectral convergence for $k=5$.}
    \label{fig:geometries}
\end{figure}

\begin{table*}[!t]
    \caption{Accuracy and efficiency for the quasi-axisymmetric ellipsoid, spiral, and conch geometries with $k=5$ and dyadic refinement near the poles.}
    \label{tab:single_dyadic}
    \centering
    \renewcommand{\arraystretch}{0.8}
    \begin{tabular}
    {p{2cm}<{\raggedright}p{0.8cm}<{\raggedright}p{0.8cm}<{\raggedright}p{0.8cm}
    <{\raggedright}p{1cm}<{\raggedright}p{1.5cm}<{\raggedright}p{1.5cm}
    <{\raggedright}p{1.5cm}<{\raggedright}p{2cm}<{\raggedright}}
    \specialrule{0.1em}{4pt}{4pt}
    Geometry & $N_p$ & $N_{\mathrm{ref}}$ & $N_f$ & $N_{\mathrm{pts}}$ & $T_{\mathrm{ker}}$ & $T_{\mathrm{mat}}$ & $T_{\mathrm{solve}}$ & $\mathrm{Error}$ \\
    \specialrule{0.05em}{4pt}{4pt}
    \multirow{3}{*}{\shortstack[l]{Quasi-\\ellipsoid}}
      & 6  & 0 & 40 & 3840  & 63.49  & 67.69  & 0.60  & 4.07E-11 \\
      & 16 & 2 & 60 & 15360 & 423.98 & 458.44 & 19.70 & 8.08E-14 \\
      & 22 & 5 & 60 & 21120 & 744.32 & 809.59 & 43.82 & 4.55E-15 \\
    \specialrule{0em}{3pt}{3pt}
    \multirow{3}{*}{Spiral}
      & 8  & 0 & 40 & 5120  & 112.71 & 119.90 & 1.76  & 1.92E-08 \\
      & 16 & 2 & 40 & 10240 & 230.72 & 248.82 & 6.96  & 2.01E-11 \\
      & 22 & 5 & 60 & 21120 & 878.91 & 945.30 & 45.54 & 1.93E-13 \\
    \specialrule{0em}{3pt}{3pt}
    \multirow{3}{*}{Conch}
      & 6  & 0 & 40 & 3840  & 68.21  & 72.52  & 0.82  & 2.53E-09 \\
      & 16 & 2 & 40 & 10240 & 258.97 & 277.75 & 7.83  & 2.01E-11 \\
      & 22 & 5 & 60 & 21120 & 864.51 & 930.71 & 42.56 & 1.93E-13 \\
    \specialrule{0.1em}{2pt}{0pt}
    \end{tabular}
\end{table*}

Figure~\ref{fig:geometries} shows three representative quasi-axisymmetric geometries: a quasi-ellipsoid, a spiral, and a conch, together with their convergence behavior for $k=5$. 
The corresponding parametrization equations are given by
\begin{subequations}\label{eq:geometries}
\begin{align}
\text{Quasi-ellipsoid:}\quad &
\begin{cases}
x = \cos(t)+2\cos(t)\cos(\varphi), \\
y = 2\cos(t)\sin(\varphi), \\
z = 3\sin(t),
\end{cases}
\label{eq:quasiellipsoid}
\\
\text{Spiral:}\quad &
\begin{cases}
x = 0.05\cos(4\pi t)+1.5\cos(t)\cos(\varphi),\\
y = 0.1\sin(4\pi t)+1.5\cos(t)\sin(\varphi),\\
z = 2\sin(t),
\end{cases}
\label{eq:spiral}
\\
\text{Conch:}\quad &
\begin{cases}
x = \left(0.2+0.1\sin(8t)\right)\cos(t)+\cos(t)\cos(\varphi),\\
y = \left(0.2+0.1\sin(8t)\right)\sin(t)+\cos(t)\sin(\varphi),\\
z = 1.5\sin(t),
\end{cases}
\label{eq:conch}
\end{align}
\end{subequations}
where \(t\in[-\pi/2,\pi/2]\) and \(\varphi\in[0,2\pi]\). 
For each geometry, the $\ell^2$ error of the scattered field is plotted against the total number of discretization points $N_{\mathrm{tot}}$ for three choices of the azimuthal mode number $N_f$.

The error decays rapidly as $N_{\mathrm{tot}}$ increases for all three geometries, confirming the spectral accuracy of the proposed method across different shapes. Consistent with the analysis in Section~\ref{sec:error}, accurate resolution requires balanced refinement in the $s$- and $\theta$-directions. If $N_f$ is too small, the error eventually saturates at a level determined by the azimuthal discretization, and further increasing $N_p$ alone does not improve the accuracy. Increasing $N_f$ lowers this saturation level, so both discretization directions must be refined together to obtain high precision.

For geometries with large curvature variation near the poles, such as the spiral and conch, uniform refinement eventually stagnates because of endpoint singularities in the generating curve. Table~\ref{tab:single_dyadic} shows that combining uniform refinement with dyadic refinement near the poles restores high accuracy. With five dyadic refinement levels ($N_{\mathrm{ref}}=5$), the errors are reduced to between \(10^{-15}\) and \(10^{-13}\), representing a three to four orders of magnitude improvement over uniform refinement alone.

\subsection{Multi-body scattering}

\subsubsection{Example 3: Spectral convergence for multiple scatterers}

\begin{figure}[!t]
    \begin{minipage}[t]{0.32\textwidth}
        \centering
        \includegraphics[width=1.7in]{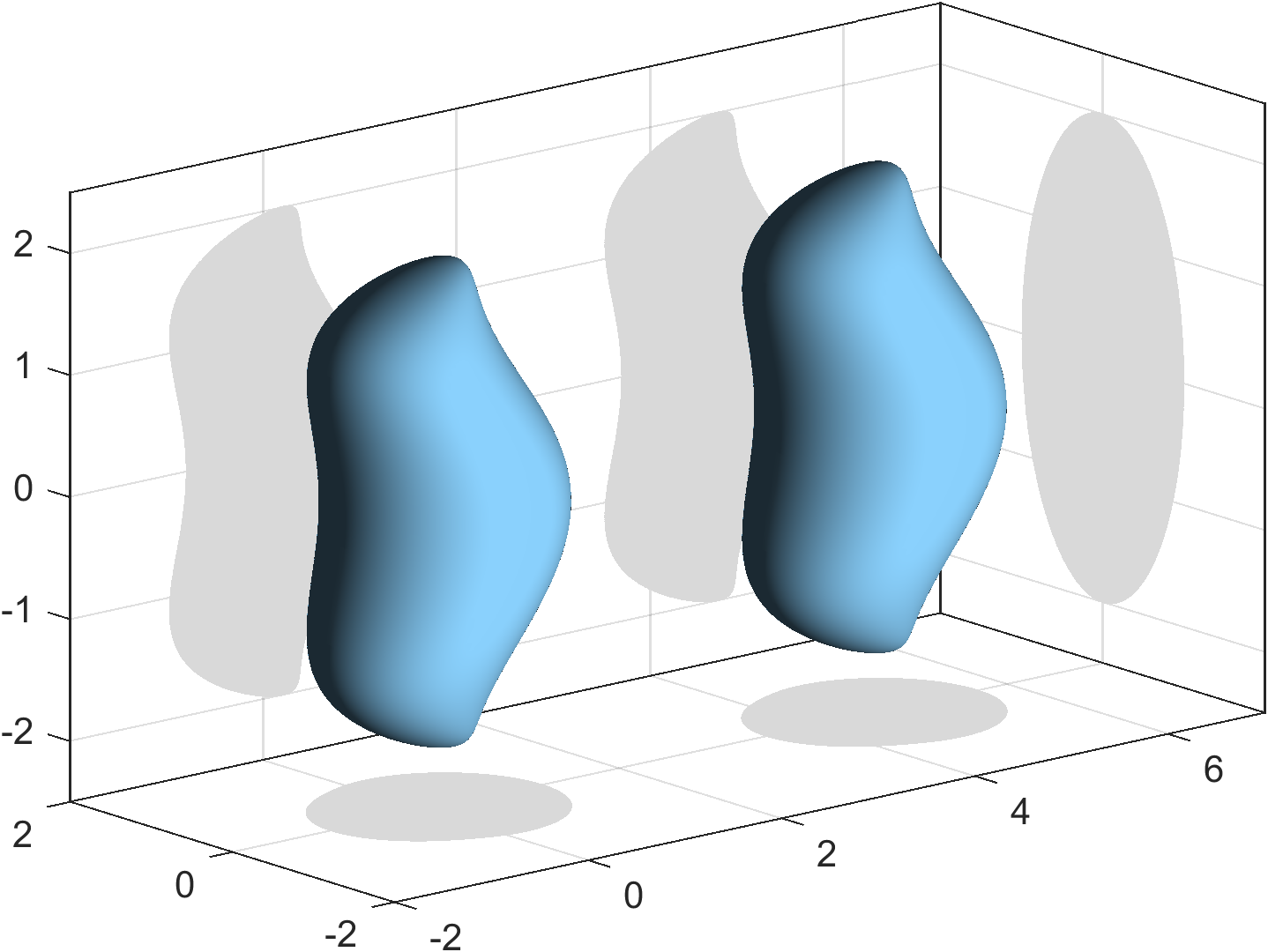}
        \subcaption{Two quasi-wave geometries.}
        \label{fig:twospiral}
    \end{minipage}
    \begin{minipage}[t]{0.32\textwidth}
        \centering
        \includegraphics[width=1.8in]{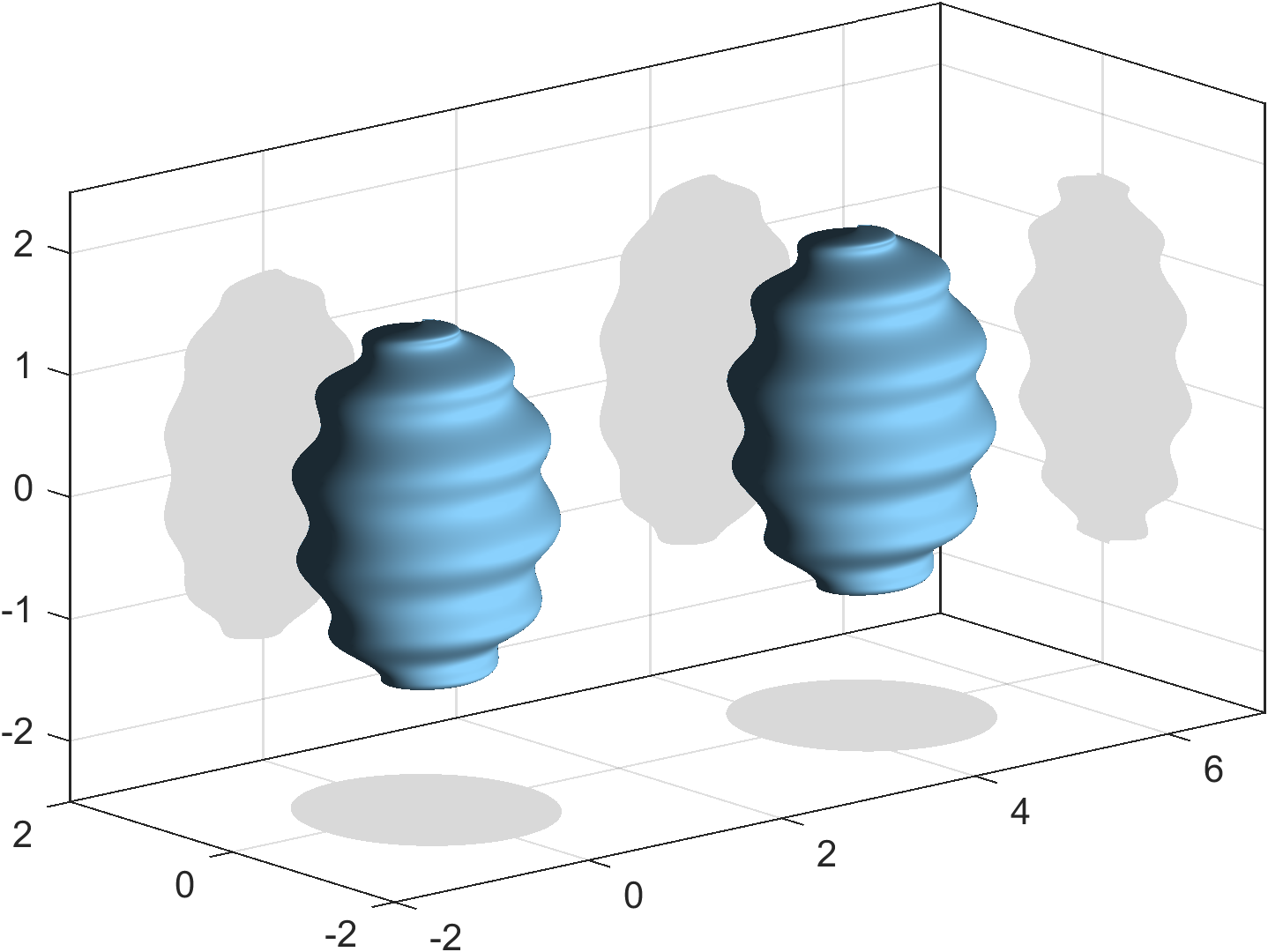}
        \subcaption{Two spiral geometries.}
        
    \end{minipage}
    \begin{minipage}[t]{0.32\textwidth}
        \centering
        \includegraphics[width=1.8in]{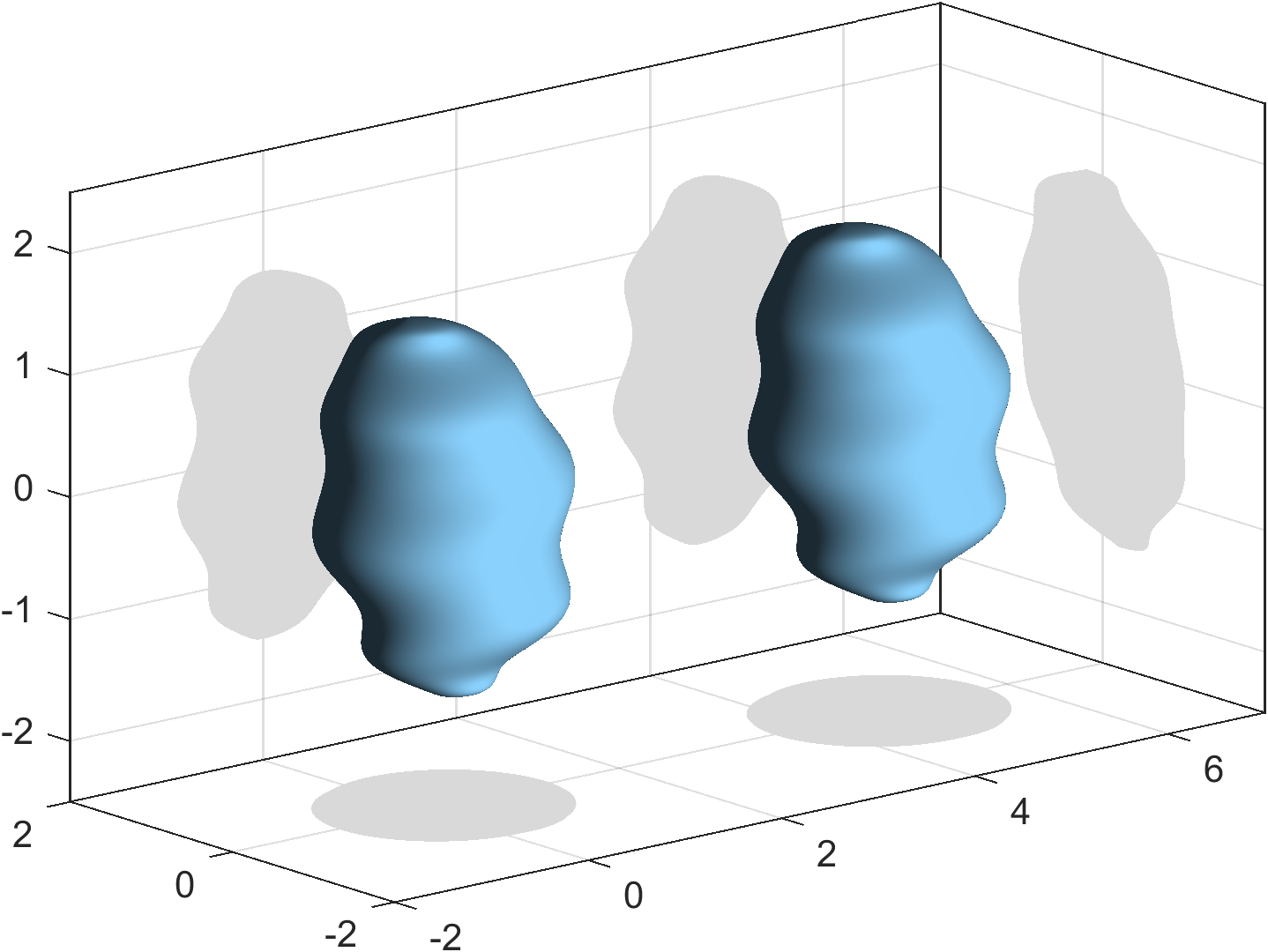}
        \subcaption{Two conch geometries.}
        
    \end{minipage}
    \caption{Two-body scattering configurations used in the convergence tests.}
    \label{fig:twobody}
\end{figure}

\begin{table*}[!t]
    \caption{Exterior scattering by two scatterers ($N_{\mathrm{geo}}=2$) for three representative geometries.}
    \label{tab:multibody2}
    \centering
    \renewcommand{\arraystretch}{0.8}
    \begin{subtable}[t]{0.32\textwidth}
        \centering
        \caption{Quasi-wave.}
        \label{tab:multibody2_quasiwave}
        \begin{tabular}
        {p{0.2cm}<{\raggedright}p{0.2cm}<{\raggedright}p{0.4cm}<{\raggedright}
        p{0.8cm}<{\raggedright}p{1.6cm}<{\raggedright}}
        \specialrule{0.1em}{4pt}{4pt}
        $k$ & $N_p$ & $N_f$ & $N_{\mathrm{pts}}$ & $\mathrm{Error}$ \\
        \specialrule{0.05em}{4pt}{4pt}
        2  & 2 & 20 & 640  & 4.48E-09 \\
        2  & 4 & 20 & 1280 & 1.10E-11 \\
        2  & 6 & 20 & 1920 & 6.83E-13 \\
        \specialrule{0em}{3pt}{3pt}
        5  & 4 & 40 & 2560 & 1.49E-10 \\
        5  & 6 & 40 & 3840 & 1.51E-12 \\
        5  & 8 & 40 & 5120 & 1.09E-13 \\
        \specialrule{0em}{3pt}{3pt}
        10 & 6 & 40 & 3840 & 9.18E-10 \\
        10 & 8 & 40 & 5120 & 3.66E-12 \\
        \specialrule{0.1em}{2pt}{0pt}
        \end{tabular}
    \end{subtable}
    \hfill
    \begin{subtable}[t]{0.32\textwidth}
        \centering
        \caption{Spiral.}
        \label{tab:multibody2_spiral}
        \begin{tabular}
        {p{0.2cm}<{\raggedright}p{0.2cm}<{\raggedright}p{0.4cm}<{\raggedright}
        p{0.8cm}<{\raggedright}p{1.6cm}<{\raggedright}}
        \specialrule{0.1em}{4pt}{4pt}
        $k$ & $N_p$ & $N_f$ & $N_{\mathrm{pts}}$ & $\mathrm{Error}$ \\
        \specialrule{0.05em}{4pt}{4pt}
        2  & 2 & 40 & 1280 & 3.61E-06 \\
        2  & 4 & 40 & 2560 & 4.99E-07 \\
        2  & 6 & 40 & 3840 & 6.31E-08 \\
        \specialrule{0em}{3pt}{3pt}
        5  & 4 & 40 & 2560 & 1.62E-07 \\
        5  & 6 & 40 & 3840 & 3.61E-09 \\
        5  & 8 & 40 & 5120 & 4.49E-10 \\
        \specialrule{0em}{3pt}{3pt}
        10 & 6 & 60 & 5760 & 1.11E-06 \\
        10 & 8 & 60 & 7680 & 5.27E-07 \\
        \specialrule{0.1em}{2pt}{0pt}
        \end{tabular}
    \end{subtable}
    \hfill
    \begin{subtable}[t]{0.32\textwidth}
        \centering
        \caption{Conch.}
        \label{tab:multibody2_conch}
        \begin{tabular}
        {p{0.2cm}<{\raggedright}p{0.2cm}<{\raggedright}p{0.4cm}<{\raggedright}
        p{0.8cm}<{\raggedright}p{1.6cm}<{\raggedright}}
        \specialrule{0.1em}{4pt}{4pt}
        $k$ & $N_p$ & $N_f$ & $N_{\mathrm{pts}}$ & $\mathrm{Error}$ \\
        \specialrule{0.05em}{4pt}{4pt}
        2  & 2 & 40 & 1280 & 3.32E-07 \\
        2  & 4 & 40 & 2560 & 7.29E-10 \\
        2  & 6 & 40 & 3840 & 6.49E-11 \\
        \specialrule{0em}{3pt}{3pt}
        5  & 4 & 40 & 2560 & 2.29E-08 \\
        5  & 6 & 40 & 3840 & 2.36E-10 \\
        5  & 8 & 40 & 5120 & 1.30E-10 \\
        \specialrule{0em}{3pt}{3pt}
        10 & 6 & 60 & 5760 & 9.88E-09 \\
        10 & 8 & 60 & 7680 & 2.83E-10 \\
        \specialrule{0.1em}{2pt}{0pt}
        \end{tabular}
    \end{subtable}
\end{table*}

We first consider two-scatterer configurations consisting of quasi-wave, spiral, and conch geometries, as shown in Figure~\ref{fig:twobody}. 
Table~\ref{tab:multibody2} gives the corresponding exterior scattering results for $k=2,5,10$. For each geometry and wavenumber, the error decreases as $N_p$ increases, confirming that the multi-body solver preserves the rapid convergence observed for the single-body discretization.

The quasi-wave geometry reaches high accuracy with relatively few panels: the error is $\mathcal{O}(10^{-13})$ for $k=2$ with $N_p=6$, $N_f=20$, and also for $k=5$ with $N_p=8$, $N_f=40$. The spiral and conch geometries are more demanding because of their more complex azimuthal structure, and they therefore require larger values of $N_f$ to reach comparable accuracy. At $k=10$, the spiral case has the largest errors at the same discretization level, reflecting the combined difficulty of higher frequency and more complicated geometry. Even so, the conch case still achieves $\mathcal{O}(10^{-10})$ accuracy with $N_p=8$ and $N_f=60$. Overall, these two-body experiments are consistent with the single-body results in Section~\ref{sec:single}. They also support Theorem~\ref{thm:multi_conv}, namely, the inter-body coupling does not degrade the spectral accuracy of the underlying Nystr\"om discretization when the scatterers are well separated.

\begin{figure}[!t]
    \begin{minipage}[t]{0.32\textwidth}
        \centering
        \includegraphics[width=1.8in]{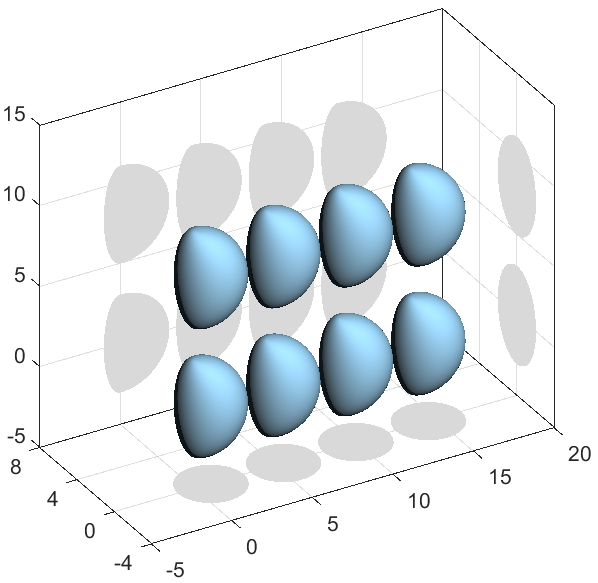}
\subcaption{Eight quasi-axisymmetric ellipsoids.}
        \label{subfig: blue 8}
    \end{minipage}
    \begin{minipage}[t]{0.33\textwidth}
	\centering
	\includegraphics[width=2in]{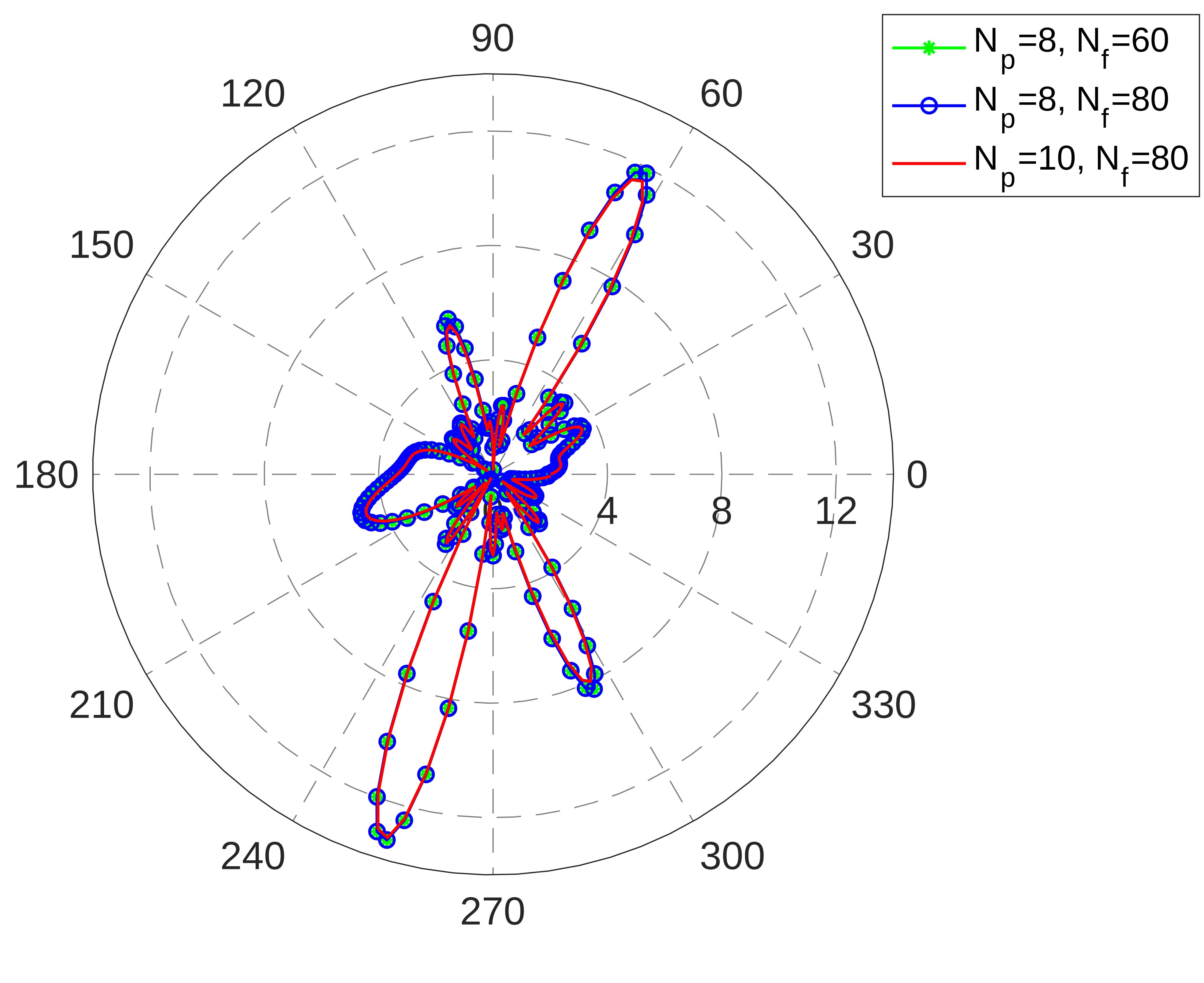}
	\subcaption{$|u_\infty(\cdot,\pi/2)|.$}
	\label{subfig: 8 far theta}
    \end{minipage}
    \begin{minipage}[t]{0.33\textwidth}
	\centering
	\includegraphics[width=2in]{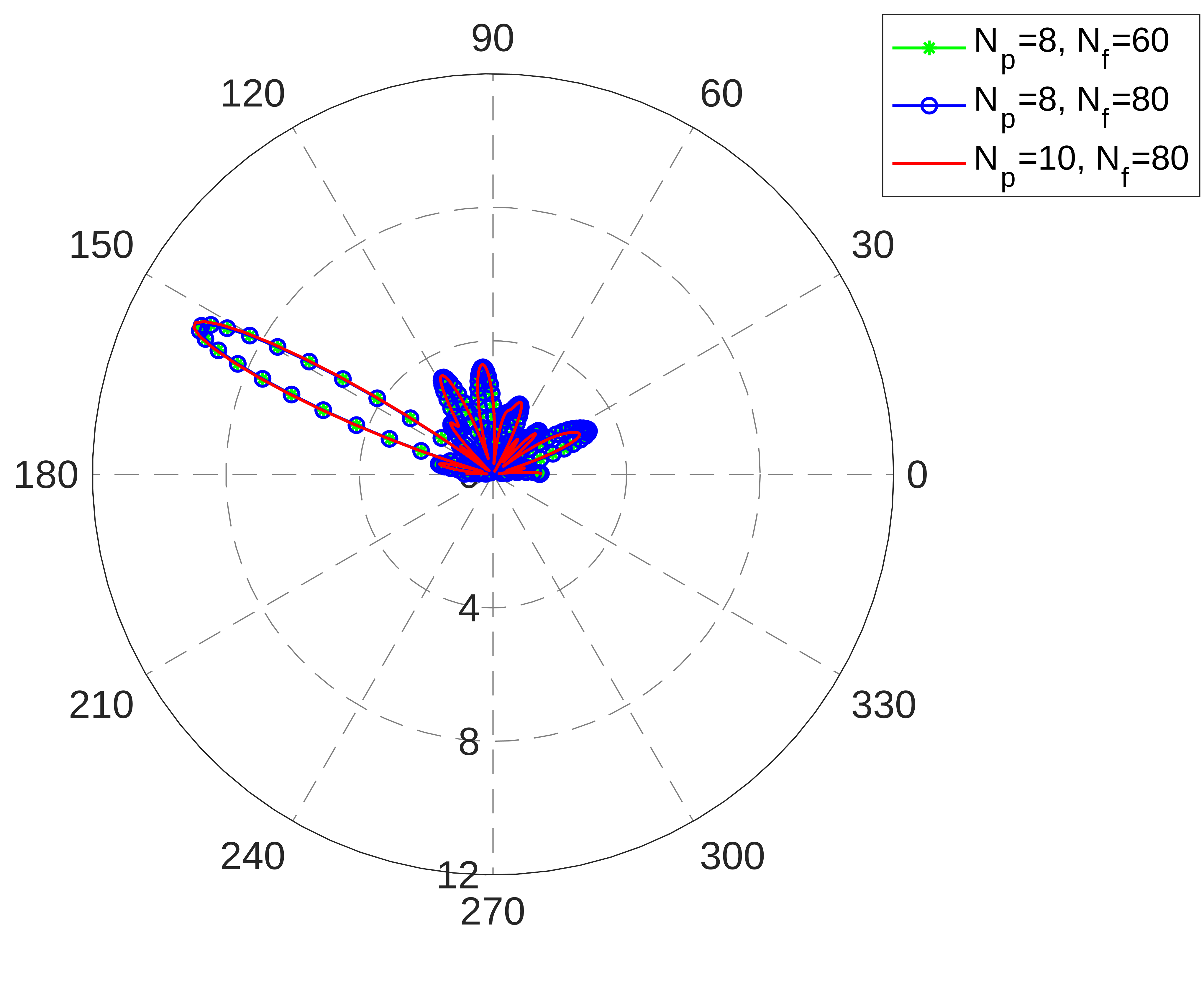}
	\subcaption{$|u_\infty(0,\cdot)|.$}
	\label{subfig: 8 far phi}
    \end{minipage}
\caption{Far-field results for eight quasi-axisymmetric ellipsoids with $k=10$.}
\label{fig: farfield8}
\end{figure}

\begin{table*}[!t]
    \caption{Exterior scattering by eight quasi-axisymmetric ellipsoids using the double-layer potential}
    \label{tab:geo1doubleexterior8}
    \centering
    \renewcommand{\arraystretch}{0.8}
    \begin{tabular}
    {p{0.5cm}<{\raggedright}p{1cm}<{\raggedright}p{1cm}<{\raggedright}p{1cm}
    <{\raggedright}p{1.5cm}<{\raggedright}p{1.5cm}
    <{\raggedright}p{1.5cm}
    <{\raggedright}p{2cm}
    <{\raggedright}}
    \specialrule{0.1em}{4pt}{4pt}
    $k$&  $N_{p}$&  $N_{f}$&  $N_{pts}$&  $T_{ker}$&  $T_{mat}$&  $T_{solve}$&  $Error$ \\
    \specialrule{0.05em}{4pt}{4pt}
    2&	6 &	40&	3840 &	65.85 &	 138.65  &  89.40  &	2.68E-10\\
    2&	6 &	60&	5760 &	105.62&	 264.19  &  258.48 &	2.68E-12\\
    2&	10&	80&	12800 &	281.78&	 1078.61 &  2486.45&	4.83E-13\\
    \specialrule{0em}{3pt}{3pt}
    5&	6 &	40&	3840 &	65.46 &	 138.48 &  99.89   &	3.75E-09\\
    5&	6 &	60&	5760 &	103.57&	 262.48 &  296.11  &	3.78E-11\\
    5&	10&	80&	12800 &	273.63&	 1076.73&  2575.58 &	5.41E-12\\
    \specialrule{0em}{3pt}{3pt}
    10&	6 &	60&	5760 &	104.93&  262.45 &  246.81  &	2.38E-07\\
    10&	8 &	60&	7680 &	151.72&  421.06 &  619.35  &	3.37E-09\\
    10&	10&	80&	12800 &	277.35&	 1079.94&  2786.19 &	3.52E-11\\
    \specialrule{0.1em}{2pt}{0pt}
    \end{tabular}
\end{table*}

We next consider a larger configuration consisting of eight translated quasi-axisymmetric ellipsoids defined by \eqref{eq:quasiellipsoid}. The minimum distance between any two scatterers is at least $0.5$, and all scatterers satisfy the same boundary condition. Figure~\ref{fig: farfield8} shows the geometry and representative far-field patterns for $k=10$. The scattered field is represented by the double-layer potential, and Table~\ref{tab:geo1doubleexterior8} reports the accuracy and timing results for $k=2,5,10$. The method achieves errors ranging from $10^{-7}$ to $10^{-13}$ over the tested wavenumbers and discretization levels, demonstrating robustness for multi-body wave interactions. As expected, higher wavenumbers require finer discretizations to achieve comparable accuracy. The timing results show that both the matrix assembly time $T_{\mathrm{mat}}$ and the linear solve time $T_{\mathrm{solve}}$ increase substantially with the total number of unknowns. For the larger discretizations in this eight-body example, $T_{\mathrm{solve}}$ becomes the dominant cost, reflecting the increasing expense of solving the coupled multiple scattering linear system.


\subsubsection{Example 4: Regular cubic array with block reuse}
\label{sec:cubic}


\begin{figure}[!t]
	\begin{minipage}[t]{0.45\textwidth}
		\centering
		\includegraphics[width = 2.7in]{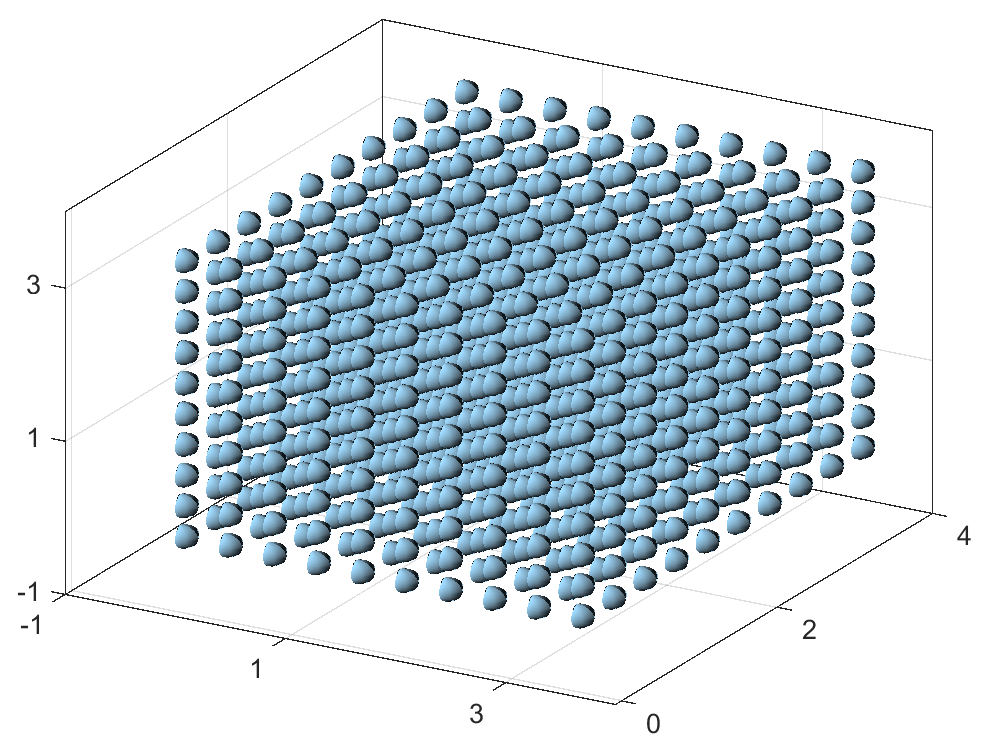}
\subcaption{One thousand quasi-axisymmetric ellipsoids.}
		\label{fig: blue 1000}
	\end{minipage}
	\begin{minipage}[t]{0.45\textwidth}
		\centering
		\includegraphics[width = 2.7in]{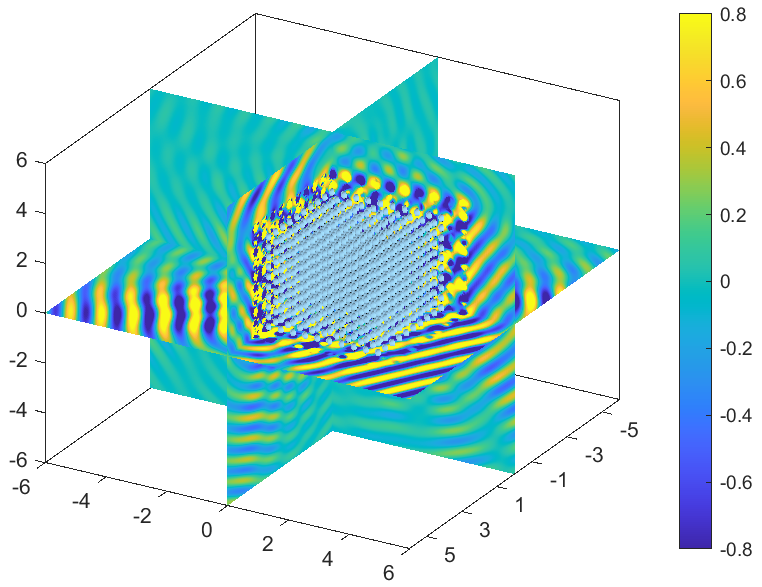}
		\subcaption{The real part of $u^{sc}$.}
		\label{fig:5}
	\end{minipage}
	\begin{minipage}[t]{0.45\textwidth}
		\centering
		\includegraphics[width=2in]{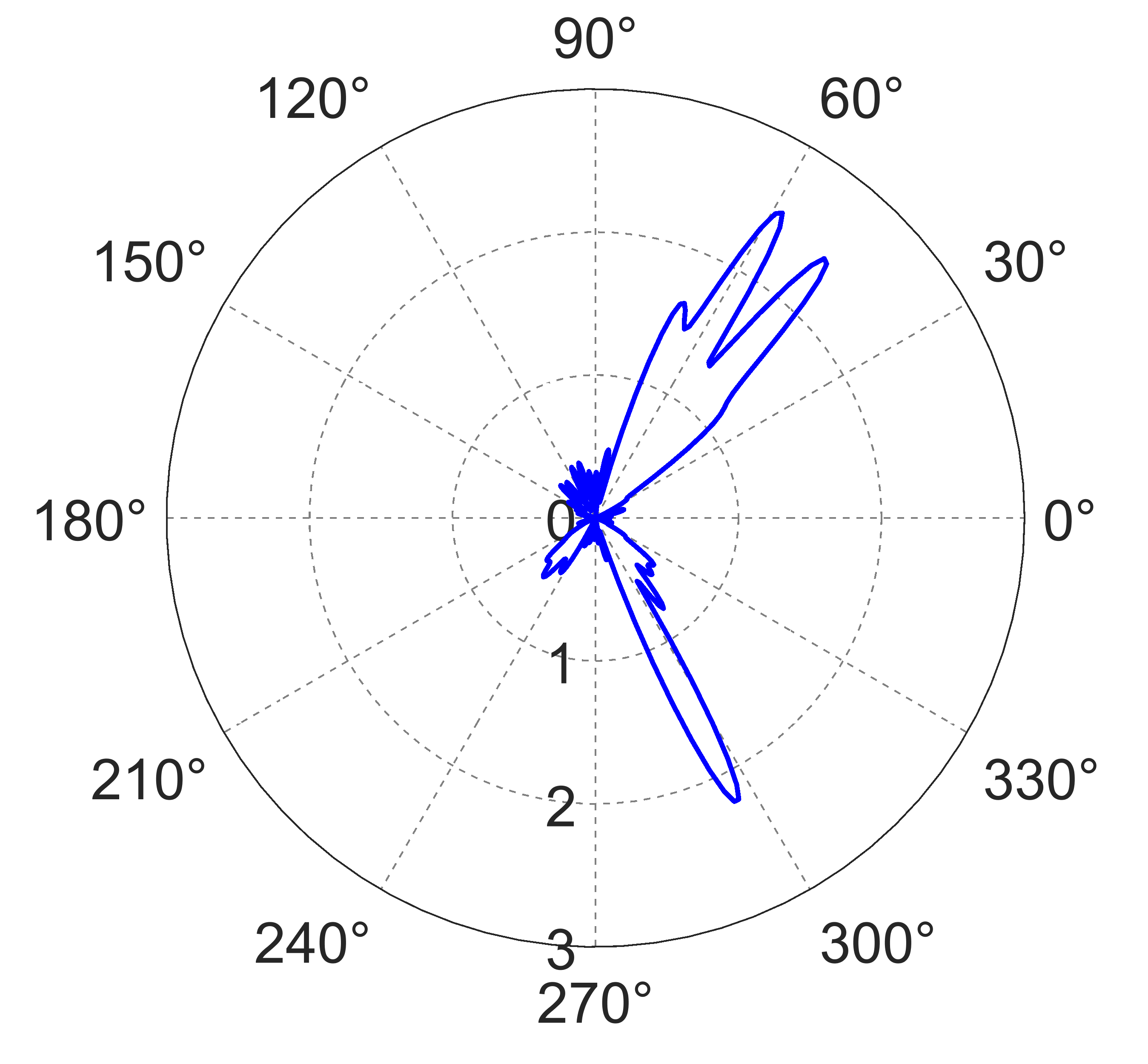}
		\subcaption{$|u_\infty(\cdot,\pi/2)|.$}
		\label{subfig: 1000 far theta}
	\end{minipage}
	\begin{minipage}[t]{0.45\textwidth}
		\centering
		\includegraphics[width=2in]{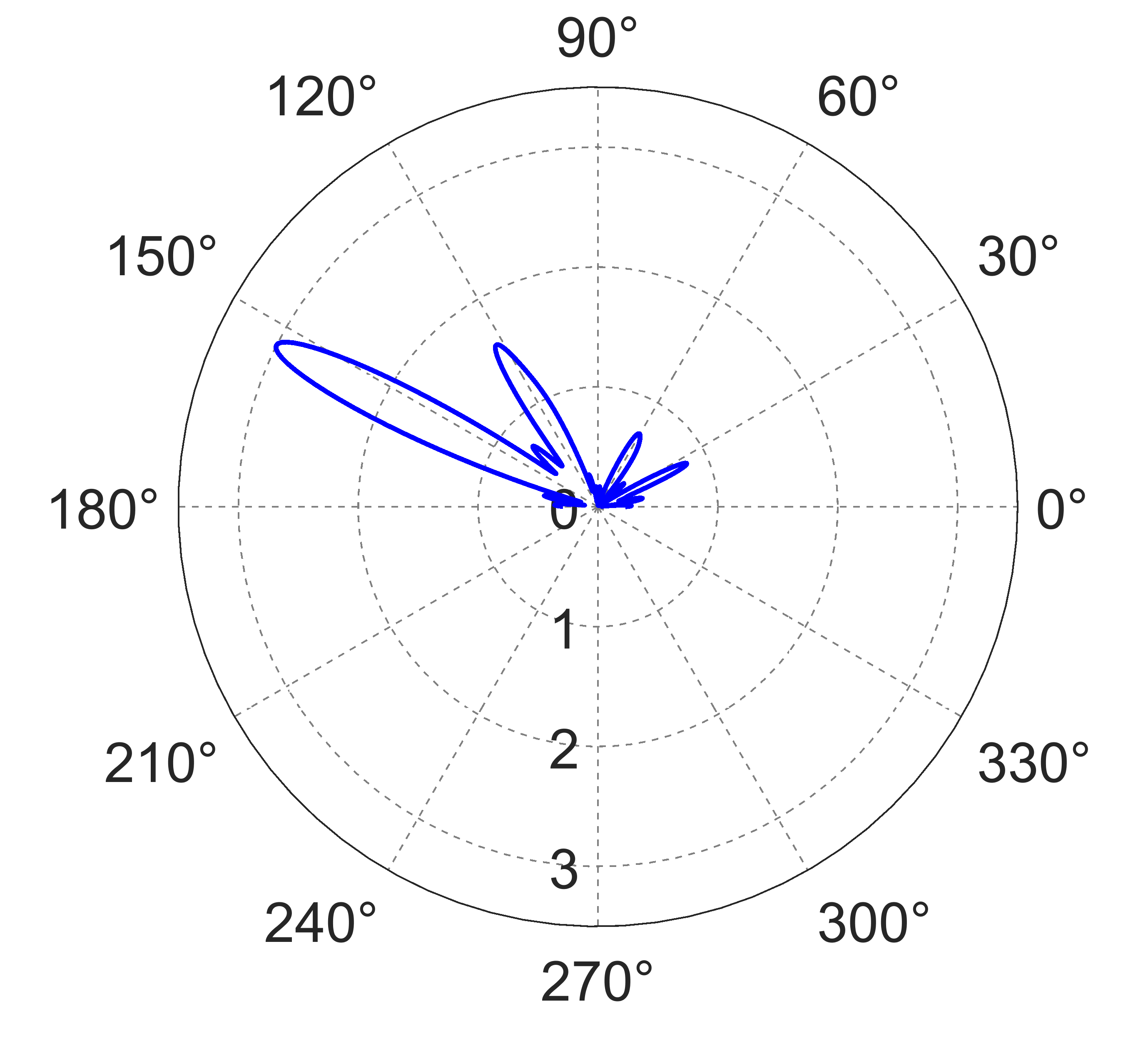}
		\subcaption{$|u_\infty(0,\cdot)|.$}
		\label{subfig: 1000 far phi}
	\end{minipage}
\caption{Regular cubic array with 1000 quasi-axisymmetric ellipsoids in the box $[-1,4]\times[-1,4]\times[-1,4]$. The minimum surface-to-surface distance is greater than $0.05$.}
		\label{fig: farfield1000}
\end{figure}

\begin{table*}[!t]
    \caption{Accuracy and efficiency for exterior scattering by regular cubic arrays using the double-layer potential. The discretization parameters are $N_p=1$, $N_f=6$, and $N_{\mathrm{pts}}=96$. The column ``Reuse'' gives the block reuse ratio
    $\left(1 - \frac{(2P-1)^3-1}{N_{\mathrm{geo}}(N_{\mathrm{geo}}-1)}\right)\times 100\%$.}
    \label{tab:multibody_cubic}
    \centering
    \renewcommand{\arraystretch}{0.8}
    \begin{tabular}
    {p{0.5cm}<{\raggedright}p{1cm}<{\raggedright}p{1.2cm}<{\raggedright}
    p{1.5cm}<{\raggedright}p{1.5cm}<{\raggedright}p{1.5cm}
    <{\raggedright}p{2cm}<{\raggedright}p{1.2cm}<{\raggedright}}
    \specialrule{0.1em}{4pt}{4pt}
    $k$ & $N_{\mathrm{geo}}$ & $N_{\mathrm{tot}}$ & $T_{\mathrm{ker}}$ & $T_{\mathrm{mat}}$ & $T_{\mathrm{solve}}$ & $\mathrm{Error}$ & Reuse \\
    \specialrule{0.05em}{4pt}{4pt}
    2  &   8 &   768 & 0.55 & 0.62  & 0.04   & 4.13E-07 & 53.6\% \\
    2  &  64 &  6144 & 0.56 & 1.52  & 0.68   & 6.28E-08 & 91.5\% \\
    2  & 216 & 20736 & 0.55 & 11.17 & 7.06   & 1.85E-08 & 97.1\% \\
    2  & 512 & 49152 & 0.49 & 45.78 & 35.81  & 7.34E-09 & 98.7\% \\
    2  & 1000 & 96000 & 0.61  & 206.38 &  188.76 & 3.37E-09  & 99.3\%\\
    \specialrule{0em}{3pt}{3pt}
    5  &   8 &   768 & 0.58 & 0.65  & 0.04   & 3.48E-07 & 53.6\% \\
    5  &  64 &  6144 & 0.61 & 1.60  & 0.96   & 4.37E-08 & 91.5\% \\
    5  & 216 & 20736 & 0.63 & 10.36 & 14.42  & 1.01E-08 & 97.1\% \\
    5  & 512 & 49152 & 0.56 & 47.90 & 93.27  & 2.80E-09 & 98.7\% \\
    5  & 1000 & 96000 & 0.57 & 220.16 & 307.51  & 2.09E-09  & 99.3\%\\
    \specialrule{0em}{3pt}{3pt}
    10 &   8 &   768 & 0.57 & 0.62  & 0.02   & 5.11E-07 & 53.6\% \\
    10 &  64 &  6144 & 0.59 & 1.58  & 1.47   & 8.39E-08 & 91.5\% \\
    10 & 216 & 20736 & 0.49 & 9.59  & 28.57  & 6.35E-08 & 97.1\% \\
    10 & 512 & 49152 & 0.56 & 48.37 & 282.76 & 4.66E-08 & 98.7\% \\
    10  & 1000 & 96000 & 0.58 & 242.67  & 2235.26 &  3.50E-08  & 99.3\%\\
    \specialrule{0.1em}{2pt}{0pt}
    \end{tabular}
\end{table*}

We next consider regular $P\times Q\times R$ cubic arrays of identical quasi-axisymmetric ellipsoids. Since all scatterers have the same orientation and lie on a lattice, many inter-body interaction blocks differ only by translation. The matrix assembly can therefore reuse previously computed blocks instead of constructing every pairwise interaction independently.

To accelerate the computation, the linear solver is chosen according to the wavenumber. For $k=2$ and $k=5$, we use GMRES with a block-diagonal preconditioner formed from the LU factorization of the single-body self-interaction matrix. This factorization is computed once and shared by all scatterers. For $k=10$, stronger inter-body coupling slows GMRES convergence, so we instead use the direct LU solver. This hybrid strategy balances efficiency at lower frequencies with robustness at the higher frequency tested here.

Figure~\ref{fig: farfield1000} shows the $10\times10\times10$ array with 1000 scatterers and the corresponding scattered field for $k=2$. The scatterers lie in the box $[-1,4]\times[-1,4]\times[-1,4]$, with a minimum surface-to-surface distance greater than $0.05$. The real part of $u^{sc}$ on three cross-sectional planes displays the collective wave interactions in the array, while the far-field cuts $|u_\infty(\cdot,\pi/2)|$ and $|u_\infty(0,\cdot)|$ show the multiple-lobe structure typical of periodic multi-body scattering.

Table~\ref{tab:multibody_cubic} reports the results for the exterior double-layer formulation with $N_p=1$, $N_f=6$, $N_{\mathrm{pts}}=96$, and wavenumbers $k=2,5,10$. As the array size increases from $2^3=8$ to $10^3=1000$ scatterers, the block reuse ratio rises from $53.6\%$ to $99.3\%$. Consequently, the kernel evaluation time $T_{\mathrm{ker}}$ remains nearly constant, since only a fixed set of distinct translated interaction blocks must be evaluated. The matrix construction time $T_{\mathrm{mat}}$ grows with the array size, but much more moderately than it would without block reuse. The absolute error remains between $10^{-7}$ and $10^{-9}$, which is consistent with the deliberately coarse discretization used in this large-scale experiment.

\subsubsection{Example 5: Robustness under random rotations and position perturbations}
\label{sec:random}


\begin{figure}[!t]
    \begin{minipage}[t]{0.45\textwidth}
        \centering
        \includegraphics[width=2.7in]{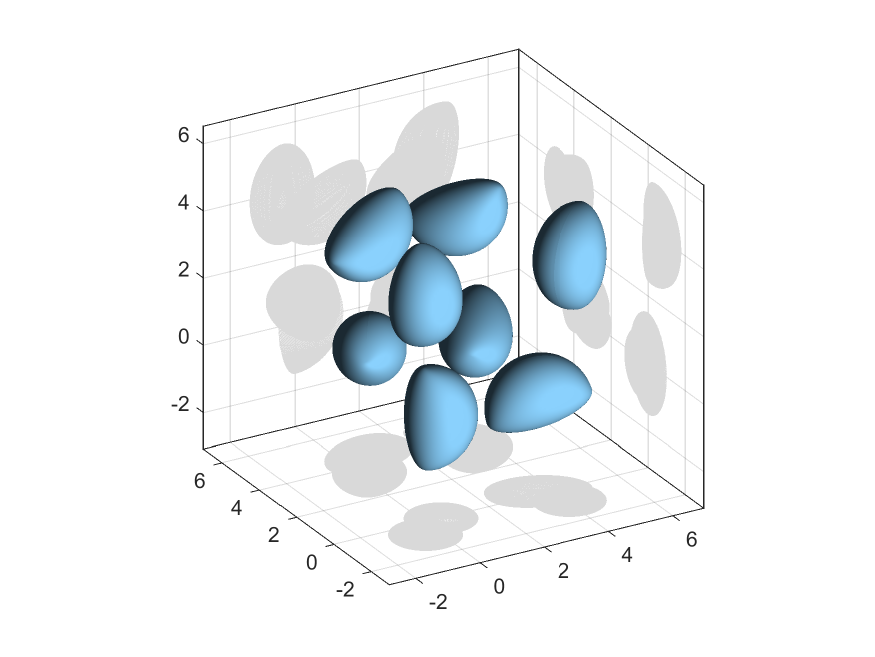}
        \subcaption{$2\times2\times2$ array.}
        \label{fig:randomarray2}
    \end{minipage}
    \begin{minipage}[t]{0.45\textwidth}
        \centering
        \includegraphics[width=2.7in]{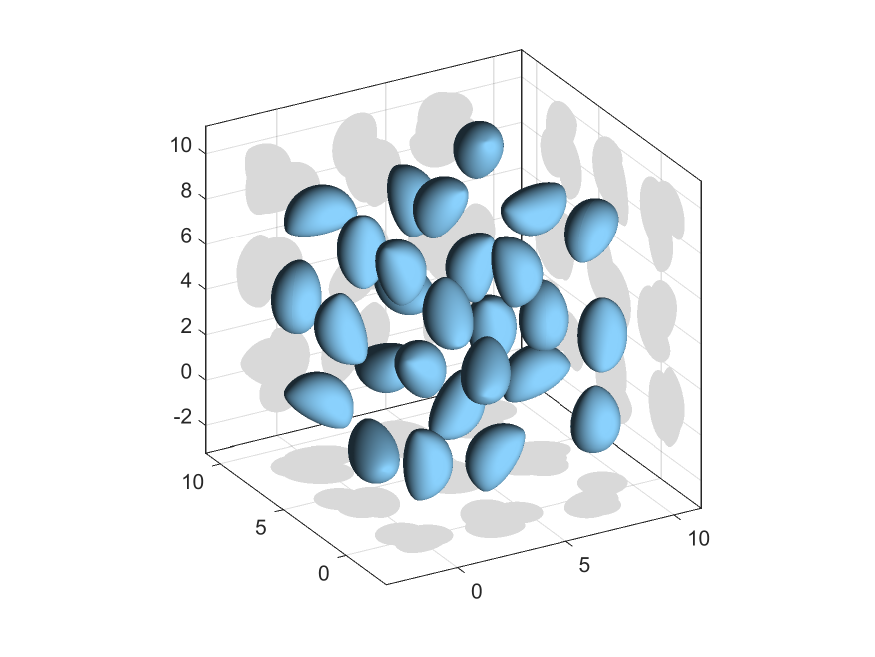}
        \subcaption{$3\times3\times3$ array.}
        
    \end{minipage}
    \caption{Illustration of a randomly configured multi-body scattering problem.
    Each scatterer is independently rotated by random angles $\theta_1\sim\mathcal{U}[0,2\pi)$
    and $\theta_2\sim\mathcal{U}[0,2\pi)$, and displaced by a random perturbation
    $\boldsymbol{\varepsilon}\sim\mathcal{U}[-\varepsilon_{\max},\varepsilon_{\max}]^3$.}
    \label{fig:randomarray}
\end{figure}

\begin{table*}[!t]
    \caption{Accuracy and efficiency for exterior scattering by randomly configured arrays with random rotations and position perturbations.}
    \label{tab:random_array}
    \centering
    \renewcommand{\arraystretch}{0.8}
    \begin{tabular}
    {p{0.5cm}<{\raggedright}p{0.8cm}<{\raggedright}p{0.6cm}<{\raggedright}p{0.6cm}<{\raggedright}
    p{0.8cm}<{\raggedright}p{1cm}<{\raggedright}p{1.3cm}<{\raggedright}p{1.3cm}<{\raggedright}
    p{1.3cm}<{\raggedright}p{2cm}<{\raggedright}}
    \specialrule{0.1em}{4pt}{4pt}
    $k$ & $N_{\mathrm{geo}}$ & $N_p$ & $N_f$ & $N_{\mathrm{pts}}$ & $N_{\mathrm{tot}}$ & $T_{\mathrm{ker}}$ & $T_{\mathrm{mat}}$ & $T_{\mathrm{solve}}$ & $\mathrm{Error}$ \\
    \specialrule{0.05em}{4pt}{4pt}
    2 &  8 & 6 & 40 & 3840 & 30720 & 145.37 & 287.00 & 98.56  & 7.70E-11 \\
    2 & 27 & 4 & 40 & 2560 & 69120 & 115.93 & 739.71 & 911.58 & 5.56E-10 \\
    \specialrule{0em}{3pt}{3pt}
    5 &  8 & 8 & 40 & 5120 & 40960 & 267.17 & 590.39 & 305.19 & 1.60E-12 \\
    5 & 27 & 4 & 40 & 2560 & 69120 & 97.48  & 751.03 & 680.02 & 6.61E-12 \\
    \specialrule{0em}{3pt}{3pt}
    10 &  8 & 6 & 60 & 5760 & 46080 & 272.89 & 521.36 & 278.43 & 9.20E-11 \\
    10 & 27 & 6 & 40 & 3840 & 103680 & 159.45  & 1697.75 & 2077.57 & 3.81E-11 \\
    \specialrule{0.1em}{2pt}{0pt}
    \end{tabular}
\end{table*}

We next test the robustness of the solver for arrays in which each scatterer is independently rotated and displaced from a regular lattice. Starting from a reference quasi-axisymmetric ellipsoid, the $i$th scatterer is assigned the random rotation
$R_i = R_z(\theta_1^{(i)})R_y(\theta_2^{(i)})$, where
$\theta_1^{(i)}, \theta_2^{(i)} \sim \mathcal{U}[0, 2\pi)$ are independent uniform random variables and
\begin{equation}
    R_z(\theta_1) =
    \begin{pmatrix}
        \cos\theta_1 & -\sin\theta_1 & 0 \\
        \sin\theta_1 &  \cos\theta_1 & 0 \\
        0 & 0 & 1
    \end{pmatrix}, \quad
    R_y(\theta_2) =
    \begin{pmatrix}
        \cos\theta_2 & 0 & \sin\theta_2 \\
        0 & 1 & 0 \\
        -\sin\theta_2 & 0 & \cos\theta_2
    \end{pmatrix}.
\end{equation}
The surface points and outward normals of the $i$th scatterer are obtained from the reference geometry by
\begin{equation}
    \mathbf{x}^{(i)} = R_i\mathbf{x}^{(0)} + \mathbf{c}_i, \quad
    \mathbf{n}^{(i)} = R_i\mathbf{n}^{(0)},
\end{equation}
where $\mathbf{x}^{(0)}$ and $\mathbf{n}^{(0)}$ are the reference surface points and normals.
The center of the $(p,q,r)$th scatterer in a $P\times Q\times R$ array is placed at
\begin{equation}
    \mathbf{c}_i = (p\delta,\, r\delta,\, q\delta)^T + \boldsymbol{\epsilon}_i,
    \quad \boldsymbol{\epsilon}_i \sim
    \mathcal{U}[-\epsilon_{\max}, \epsilon_{\max}]^3,
\end{equation}
where $\delta=8a$ is the nominal spacing, $a$ is the scaling factor of the reference geometry, and $\epsilon_{\max}=0.1\delta$. Let $\rho_{\rm ref}$ denote the radius of the minimum enclosing ball of the reference scatterer. Then the minimum surface-to-surface distance is bounded below by
\[
d_{\min}
\ge
\delta - 2a\rho_{\rm ref} - 2\epsilon_{\max}>0,
\]
which guarantees well-separation of all scatterers.

Figure~\ref{fig:randomarray} illustrates the resulting $2\times2\times2$ and $3\times3\times3$ configurations. Table~\ref{tab:random_array} reports the corresponding exterior scattering results for $N_{\mathrm{geo}}=8$ and $N_{\mathrm{geo}}=27$ scatterers. The errors remain small for all tested wavenumbers and array sizes. Specifically, they are of order $10^{-11}$--$10^{-10}$ for $k=2$, improve to about $10^{-12}$ for $k=5$, and remain at the $10^{-11}$ level for $k=10$. Thus, random orientations and nonuniform positions do not cause a noticeable loss of accuracy, provided that the scatterers remain well separated.

The timings also reflect the additional cost of general random configurations. Unlike the regular arrays considered above, these configurations do not admit block reuse, since the relative orientations and positions of the scatterers are all different. Consequently, the matrix construction time $T_{\mathrm{mat}}$ grows rapidly with $N_{\mathrm{geo}}$. For example, when $N_{\mathrm{geo}}=27$ and $k=10$, $T_{\mathrm{mat}}$ reaches $1697.75$s and the solve time $T_{\mathrm{solve}}$ reaches $2077.57$s. These results confirm that the solver remains robust for arbitrarily oriented, nonuniformly placed scatterers.

\section{Conclusion}

This paper presents an efficient, high-accuracy boundary integral method for solving multiple scattering problems on quasi-axisymmetric surfaces in $\mathbb{R}^3$. By exploiting the azimuthal periodicity of these geometries, the proposed solver combines FFT-based discrete convolution, kernel splitting, and recurrence relations for the efficient evaluation of modal Green's functions and their derivatives. Coupled with generalized Gaussian quadrature and a high-order Nyström discretization, this framework delivers accurate and efficient simulation of wave scattering for a broad class of quasi-axisymmetric structures.

The present work provides a robust foundation for several natural extensions. Future work includes the incorporation of the fast multipole method and hierarchical matrix compression to improve scalability for much larger configurations. Another direction is the optimization of systems composed of multiple acoustic scatterers, such as acoustic metasurfaces and engineered scattering arrays \cite{article44}. Because the present solver combines high accuracy with the ability to treat many interacting bodies, it provides a natural forward model for shape optimization problems in which the locations, orientations, and geometries of the scatterers must be tuned to achieve prescribed wave responses. Other important directions include extending the framework to layered media scattering \cite{article53} and inverse scattering problems.

\begin{mycomment}
We can approximate the scattering field at a specified position through numerical solutions. The main research results are as follows
\begin{itemize}
    \item We propose parameter equations for quasi-axisymmetric target bodies and summarize a general calculation format for equations with similar integral formats based on the high-precision FFT numerical algorithm for axisymmetric objects.
    \item We expand the periodic function into node values at discrete points by introducing the Lagrange basis polynomial and construct the modal form of the Green's function. Then using the principle of convolution and the Fourier transform we can calculate the modal function corresponding to the Helmholtz equation at a given wavenumber by reconstructing the relative position relationship between the target point and the source point. Using single-layer potential and double-layer potential respectively to represent the solution of the equation, we can construct a scalar linear system with scale determined by the number of panels, order, and azimuth partition density. The numerical algorithm can achieve accuracy of $10^{-10}$ or above for quasi-axisymmetric objects with smooth surfaces
    \item We extend the numerical algorithm for single scattering problem to multiple scattering problem. Specifically, we utilize high-order Gaussian quadrature to discretize each surface. We consider the interaction between target bodies and the self-interaction of each target body. For the same object, we avoid duplicate construction of matrix elements and reduce computational costs by translation. 
\end{itemize}
\end{mycomment}


\end{document}